\documentclass[a4paper,12pt]{amsart}
 
\usepackage{amssymb, amsmath, amsthm, amsfonts}
\usepackage{extsizes}
\usepackage{hyperref}
\usepackage{xcolor,graphicx}

\usepackage{booktabs}
\theoremstyle{plain}
\newtheorem{theorem}{Theorem}[section]
\newtheorem{lemma}[theorem]{Lemma}
\newtheorem{corollary}[theorem]{Corollary}
\newtheorem{proposition}[theorem]{Proposition}

\theoremstyle{definition}
\newtheorem{definition}[theorem]{Definition}
\newtheorem{remark}[theorem]{Remark}

\def\r{\mathbb R}
 
 \def\sol{\text{Sol}_3}
\usepackage{xcolor,graphicx}

\begin{document}
\title[Gauss curvature flow solitons in Sol]{Gauss curvature solitons on invariant surfaces in the homogeneous space Sol}
\author{Rafael Belli}
\address{Department of Mathematics. Federal University of São Carlos. 13565-905 São Carlos, Brazil}
\email{rafaelbelli@estudante.ufscar.br}
\author{Rafael L\'opez}
\address{Department of Geometry and Topology. University of Granada. 18071 Granada, Spain}
\email{rcamino@ugr.es}
\subjclass{53A10, 53C42}
\keywords{Gauss curvature flow, Sol space, invariant surfaces, intrinsic and extrinsic Gauss curvature}
\maketitle

\begin{abstract}

We classify invariant surfaces in the 3-dimensional solvable Lie group $\sol$ that act as solitons for the Gauss curvature flow. We consider solitons associated with the canonical basis of Killing vector fields $\{F_1, F_2, F_3\}$, where $F_1$ and $F_2$ generate horizontal translations and $F_3$ generates the scaling isometry. We establish rigidity results for $F_3$-invariant surfaces, proving that specific totally geodesic vertical planes are the only $F_1$- and $F_2$-solitons. For $F_1$-invariant surfaces, we establish the main geometric properties of $F_2$- and $F_3$-solitons in both the extrinsic and intrinsic Gauss curvature.
\end{abstract}

\section{Introduction and statement of the main results}
 
The study of geometric flows has become a fundamental tool in modern Riemannian geometry, 
providing deep insights into the classification and structure of manifolds. Among these, the Gauss curvature flow (GCF), where a surface evolves with a normal velocity proportional to its Gauss curvature $K$, has been 
extensively studied since its introduction by Firey to model the shape of tumbling stones \cite{firey}. A smooth immersion $\psi: \Sigma \to \sol$ evolves by the Gauss curvature flow if it satisfies $\frac{\partial \Psi}{\partial t} = -K N$, where $K$ is the (extrinsic or intrinsic) Gauss curvature and $N$ is the unit normal vector field. Seminal papers on the GCF in Euclidean space include \cite{an,ch,ts,ur}.

A particularly interesting class of solutions are the solitons, which are surfaces whose shape remains fixed along the flow and evolve purely by the isometries of the ambient space. If we consider the isometries generated by a Killing vector field $X$, then solitons are characterized by the PDE 
\begin{equation}\label{eqsol}
K = -\langle N,X\rangle.
\end{equation}
In this paper, we study solitons of the GCF in the space $\sol$. The space $\sol$ is one of the eight model geometries of Thurston, defined as a simply connected 
3-dimensional solvable Lie group equipped with a left-invariant metric. Unlike spaces of constant sectional curvature, $\sol$ possesses a more rigid isometry group of dimension 3, and its sectional curvatures are not constant.

The study of solitons is difficult in all its generality. In order to simplify this problem, it is natural to assume a certain type of invariance of the surface that allows the PDE \eqref{eqsol} to be reduced to an ODE. In the case of $\sol$, an interesting family of surfaces consists of those that are invariant under a one-parameter group of isometries of $\sol$. Since the isometry group is of dimension $3$, essentially there are three types of invariant surfaces. Invariant surfaces with constant mean curvature or constant Gauss curvature have been studied in the literature: see \cite{lo1,lo2,lm1,lm2,lm3,vr}.

To the best of the authors' knowledge, the study of the GCF in $\sol$ has not been considered yet. Regarding another geometric flow of great interest, the mean curvature flow, it is worth pointing out the remarkable paper by Pipoli, where he classified all invariant surfaces that are solitons under the mean curvature flow \cite{pi}. The present paper is an extension of this line of research to the GCF. Moreover, we will consider solitons where $K$ is either the extrinsic or the intrinsic Gauss curvature of the surface. 

In order to precisely state the results proved in this paper, we need to introduce the $\sol$ space. Consider the model of $\sol$ as the Euclidean space $\r^3$ endowed with a group operation 
 defined by
 $$(x, y, z) \ast (x', y', z') = (x + e^{-z}x', y + e^z y', z + z'),$$
 where $(x,y,z)$ are Cartesian coordinates in $\r^3$. The metric that makes $\sol$ a Riemannian manifold is the left-invariant metric given by
 $$ds^2 = e^{2z}dx^2 + e^{-2z}dy^2 + dz^2.$$
In the soliton equation \eqref{eqsol}, we can replace $X$ with elements from a basis of Killing vector fields of $\sol$, which is formed by 
$$F_1(x,y,z)=\partial_x,\quad F_2=\partial_y,\quad F_3(x,y,z)=-x\partial_x+y\partial_y+\partial_z.$$

\begin{definition} A surface $\Sigma$ in $\sol$ is called an $F_k$-soliton, $1\leq k\leq 3$, if $\Sigma$ satisfies
\begin{equation}\label{eqsol2}
K = -\langle N,F_k\rangle,
\end{equation}
where $N$ is the unit normal vector field of $\Sigma$ and $K$ is the extrinsic ($K_{ext}$) or intrinsic ($K_{int}$) Gauss curvature of $\Sigma$.
\end{definition} 

On the other hand, in $\sol$ there are three types of surfaces invariant under a one-parameter group of isometries, corresponding to the isometries generated by the Killing vector fields $F_k$. According to each case, the invariant surfaces can be parametrized by 
\begin{align*}
(s,t)\mapsto& (t, y(s), z(s)), & \mbox{$F_1$-invariant}\\
(s,t)\mapsto & (x(s),t ,z(s)), & \mbox{$F_2$-invariant}\\
(s,t)\mapsto& (x(s)e^{-t}, y(s)e^t, t), & \mbox{$F_3$-invariant}.
\end{align*}
The isometry of $\sol$ defined by $(x,y,z)\mapsto (y,x,-z)$ maps $F_1$-invariant surfaces into $F_2$-invariant surfaces and {\it vice versa}. In particular, we can restrict our study of invariant surface to only $F_1$ and $F_3$-invariant surfaces. 

A consequence of the definition of an invariant surface is that the vector field $F_k$ is tangent to an $F_k$-invariant surface. Thus, the right-hand side of \eqref{eqsol2} vanishes identically. This proves that an $F_k$-invariant surface is an $F_k$-soliton if and only if it is extrinsically or intrinsically flat (Theorem \ref{t34}). 

Although the study of solitons under the GCF in $\sol$ is restricted to $F_1$- and $F_3$-invariant surfaces, each one has two types of solitons, and for each case, there are two types of Gauss curvature, namely, extrinsic and intrinsic. This yields a total of eight cases to study. However, for $F_3$-invariant surfaces, we have obtained a full classification in Section \ref{s4}. We have proved that the only extrinsic $F_1$-soliton (resp. $F_2$-soliton) is the vertical plane $y = 0$ (resp. $x=0$). Also, we prove that there are no intrinsic $F_1$-solitons nor $F_2$-solitons (Theorems \ref{t41} and \ref{t42}).
 
For $F_1$-invariant solitons, we have given a geometric description of the surfaces in Sections \ref{s5} and \ref{s6}, indicating their main properties. We have obtained an explicit parametrization of $F_2$-solitons in terms of an implicit equation (Theorems \ref{t54} and \ref{t57}). In the case of $F_3$-solitons, we demonstrate that horizontal and vertical planes 
are the only trivial solutions, while any other generating curve inevitably reaches 
a vertical tangent in finite arc-length, preventing its smooth extension: see Theorems \ref{t64} and \ref{t65}.

Before these results, we have deduced the parametrizations of invariant surfaces by calculating their Gauss curvature (Section \ref{s2}). In Section \ref{s3}, we derive the soliton equation \eqref{eqsol2}.
 \section{Invariant surfaces in $\sol$ and its Gauss curvature}\label{s2}
 
For the sake of completeness of the paper, in this section we recall the basic Riemannian geometry of $\sol$, emphasizing the space of Killing vector fields, and we deduce the invariant surfaces of $\sol$. Finally, we compute the Gauss curvature of these surfaces. These results are well-known in the literature. For example, invariant surfaces appear in \cite{lo2,lm1,lm3}. The calculation of the Gauss curvature for $F_1$-invariant surfaces is done in \cite{lm3}.

Let $\{e_{1}=\partial_x, e_{2}=\partial_y,e_{3}=\partial_z \} $ be the canonical basis of vector fields of $\r^{3}$. 
 A global orthonormal frame for the Lie algebra of $\sol$ consists of the left-invariant vector fields
 $$E_1 = e^{-z}e_1, \quad E_2 = e^ze_2, \quad E_3 =e_3.$$
 Using Koszul’s formula, the Levi-Civita connection $\nabla$ of the metric $g$ with respect to the orthonormal frame $\{E_1, E_2, E_3\}$ is given by
\begin{equation}\label{nabla}
\begin{array}{ccc}
\nabla_{E_1}E_1 = -E_3, & \nabla_{E_1}E_2 = 0, & \nabla_{E_1}E_3 = E_1 \\
\nabla_{E_2}E_1 = 0, & \nabla_{E_2}E_2 = E_3, & \nabla_{E_2}E_3 = -E_2 \\
\nabla_{E_3}E_1 = 0, & \nabla_{E_3}E_2 = 0, & \nabla_{E_3}E_3 = 0.
\end{array}
\end{equation}
 The isometry group of $\sol$ has dimension 3. The identity component is generated by the left translations
\[
L_{( j,k,l) }:( x,y,z) \in \sol\rightarrow
( j,k,l) \ast ( x,y,z) \in \sol\text{,}
\]%
 hence $\mbox{Iso}(\sol)$ is 3-dimensional. Given $p=( x,y,z) \in \sol$, to find a basis for this isometry group, let us consider a smooth curve $%
\alpha _{v}:( -\delta ,\delta ) \rightarrow \sol$
given by $\alpha _{v}( t) =tv$. Since the map 
\begin{equation*}
\Phi
_{p}:v\in \r^{3}\rightarrow \frac{d}{dt}{\Big|}_{t=0}( L_{\alpha
_{v}( t) }p) \in \mathfrak{X}( \sol) 
\end{equation*}
is an isomorphism onto its image (the space of Killing vector fields), a
 basis of Killing vector field is $\left\{ F_{k}(
p) :=\Phi _{p}( e_{k})\colon 1\leq k\leq 3 \right\}$. Note
that%
\begin{eqnarray*}
\Phi_{\alpha _{e_{1}}( t) }p &=&( t,0,0) \ast (
x,y,z) =( t+x,y,z) \\
\Phi_{\alpha _{e_{2}}( t) }p &=&( 0,t,0) \ast (
x,y,z) =( x,t+y,z) \\
\Phi_{\alpha _{e_{2}}( t) }p &=&( 0,0,t) \ast (
x,y,z) =( e^{-t}x,e^{t}y,t+z) .
\end{eqnarray*}%
Differentiating and evaluating at $t=0$, a basis for all
Killing vector fields in $\sol$ i given by 
 \begin{equation}\label{kf}
 \begin{split}
 F_1(x,y,z)&=e_1=e^z E_1,\\
 F_2(x,y,z)&=e_2=e^{-z}E_2\\
 F_3(x,y,z)&=-xe_1+ye_2+e_3=-xe^z E_1+ye^{-z}E_2+E_3.
 \end{split}
 \end{equation}

 \begin{definition} A surface $\Sigma$ in $\sol$ is said to be invariant under the action of a one-parameter group of isometries $\{\Phi_t\colon t\in\r\}$ if $\Phi_t(\Sigma) = \Sigma$ for all $t$. If this group is determined by the Killing vector field $F_k\in\mathfrak{X}(\sol)$, we say that $\Sigma$ is $F_k$-invariant. 
 \end{definition}

In order to simplify the notation and present the results clearly, we will restrict our study to invariant surfaces for the basis $\{F_1,F_2,F_3\}$ of Killing vector fields. Furthermore, the isometry of $\sol$ given by 
$$(x,y,z)\mapsto (y,x,-z)$$
maps $F_1$-invariant surfaces to $F_2$-invariant surfaces and {\it vice versa}. Therefore, it is enough to consider $F_k$-invariant surfaces for $k=1,3$. 

Parametrizations of $F_k$-invariant surfaces are the following (\cite{lm1,lm3}): 
\begin{align}
\Psi(s, t) &= (t, y(s), z(s)), &k=1,\label{para1}\\
\Psi(s, t)& = (x(s)e^{-t}, y(s)e^t, t), & k=3. \label{para3}
\end{align}
where $x,y,z\colon I\subset \r$ are smooth functions and $t\in\r$. 
 
 We compute the Gauss curvature of both types of invariant surfaces. Although the calculations are known (\cite{lm3}), we also provide the explicit expression for the unit normal vector field $N$, which is necessary in the soliton equation \eqref{eqsol2}.

 \begin{proposition} 
Let $\Sigma$ be an $F_1$-invariant surface in $\sol$, parametrized by \eqref{para1}. Then $K_{ext}$ and $K_{int}$ of $\Sigma$ are given by
\begin{align*}
K_{ext} &= -\frac{y' e^{2z}( y' z'' - y'' z' + 2y'z'^2 + y'^3 e^{-2z})}{W^4}, \\
K_{int} &= K_{ext} + \frac{y'^2 - z'^2 e^{2z}}{W^2},
\end{align*}
where $W =\sqrt{ y'^2 + z'^2 e^{2z}}$, and the prime denotes differentiation with respect to the parameter $s$.
\end{proposition}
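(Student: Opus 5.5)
The plan is a direct computation in the orthonormal frame $\{E_1,E_2,E_3\}$. I would compute $K_{ext}$ from the first and second fundamental forms, using the connection table \eqref{nabla}. Then I would get $K_{int}$ from the Gauss equation $K_{int}=K_{ext}+\overline{K}(T_p\Sigma)$, where $\overline{K}$ is the sectional curvature of $\sol$ on the tangent plane.

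\textbf{Step 1 (frame and normal).} From \eqref{para1} and \eqref{kf} we get $\Psi_t=e_1=e^zE_1$ and $\Psi_s=y'e_2+z'e_3=y'e^{-z}E_2+z'E_3$. Hence
\[
E=e^{2z},\qquad F=0,\qquad G=y'^2e^{-2z}+z'^2=W^2e^{-2z}.
\]
The unit normal orthogonal to both vectors is
\[
N=\frac{e^{z}}{W}\bigl(-z'E_2+y'e^{-z}E_3\bigr).
\]
This expression is also the one needed later in \eqref{eqsol2}, so I would record it explicitly.

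\textbf{Step 2 (second fundamental form).} Using \eqref{nabla}, $\nabla_{\Psi_t}\Psi_t=e^{2z}\nabla_{E_1}E_1=-e^{2z}E_3$, which gives $h_{11}=-e^{z}y'/W$.

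For the mixed term, $\nabla_{\Psi_s}\Psi_t=z'e^zE_1+e^z\nabla_{\Psi_s}E_1$. Since $\nabla_{E_2}E_1=\nabla_{E_3}E_1=0$, this vector is a multiple of $E_1$, so $h_{12}=0$.

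For $h_{22}$, I would expand
\[
\nabla_{\Psi_s}\Psi_s=(y'e^{-z})'E_2+z''E_3+y'e^{-z}\nabla_{\Psi_s}E_2+z'\nabla_{\Psi_s}E_3,
\]
with $\nabla_{E_2}E_2=E_3$, $\nabla_{E_2}E_3=-E_2$ and $\nabla_{E_3}E_i=0$. This yields
\[
\nabla_{\Psi_s}\Psi_s=\bigl(y''e^{-z}-2y'z'e^{-z}\bigr)E_2+\bigl(z''+y'^2e^{-2z}\bigr)E_3.
\]
Pairing with $N$ gives $h_{22}=(y'z''-y''z'+2y'z'^2+y'^3e^{-2z})/W$.

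Then $K_{ext}=h_{11}h_{22}/(EG)$. The factors $e^{z}$ and $e^{2z}\cdot e^{-2z}$ combine to $e^{2z}/W^4$, which produces the stated formula. This part is bookkeeping, and I expect the only real risk to be sign and exponent errors in $h_{22}$.

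\textbf{Step 3 (Gauss equation).} This step needs the sectional curvatures of $\sol$ in the frame. From \eqref{nabla} one computes $\overline{K}(E_1,E_2)=1$ and $\overline{K}(E_1,E_3)=\overline{K}(E_2,E_3)=-1$, and the mixed components such as $\langle R(E_1,E_2)E_3,E_1\rangle$ vanish.

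The tangent plane is spanned by $E_1$ and $u=(y'e^{-z}E_2+z'E_3)/(We^{-z})$. Bilinearity therefore gives
\[
\overline{K}=\frac{y'^2e^{-2z}-z'^2}{W^2e^{-2z}}=\frac{y'^2-z'^2e^{2z}}{W^2}.
\]
Adding this to $K_{ext}$ gives $K_{int}$ as stated.

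The main obstacle is checking that the cross curvature terms vanish. I would verify $\langle R(E_1,E_2)E_3,E_1\rangle=0$ directly from \eqref{nabla}, together with $[E_1,E_3]=-E_1$ and $[E_2,E_3]=E_2$, before using the diagonal decomposition.
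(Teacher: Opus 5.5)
Your route is the paper's: first and second fundamental forms in the frame $\{E_1,E_2,E_3\}$, then the Gauss equation. Your $h_{12}=0$ and $h_{22}$ are correct. Your $N$ is the opposite of \eqref{n1}; that is harmless here, but it would flip the signs in \eqref{eq17} if you used it for the soliton equation.

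The real problem is in Step 2. The $E_3$-component of your $N$ is $y'/W$, so $h_{11}=\langle -e^{2z}E_3,N\rangle=-e^{2z}y'/W$, not $-e^{z}y'/W$. With your value and $EG=W^2$, Step 2 gives $K_{ext}=-e^{z}y'(\cdots)/W^4$, which is not the claimed formula. The sentence saying ``the factors $e^{z}$ and $e^{2z}\cdot e^{-2z}$ combine to $e^{2z}/W^4$'' is false, since $e^{z}\cdot e^{2z}e^{-2z}=e^{z}$. Once $h_{11}$ is corrected, no recombination is needed: $K_{ext}=h_{11}h_{22}/W^2$ is exactly the stated expression.

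In Step 3 the brackets you plan to use have the wrong signs. Torsion-freeness with \eqref{nabla}, or a direct computation with $E_1=e^{-z}\partial_x$ and $E_2=e^{z}\partial_y$, gives $[E_1,E_3]=E_1$ and $[E_2,E_3]=-E_2$. This does not affect the cross term you single out, because $R(E_1,E_2)E_3=\nabla_{E_1}(-E_2)-\nabla_{E_2}E_1=0$ only uses $[E_1,E_2]=0$. However, computing $\overline{K}(E_1,E_3)=\langle R(E_1,E_3)E_3,E_1\rangle$ with $[E_1,E_3]=-E_1$ would give $+1$ instead of $-1$. That would flip the sign of the $z'^2e^{2z}/W^2$ term in $K_{int}$.

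With these corrections the argument is complete. Your use of the fact that $E_1$ is tangent is a more self-contained justification than the paper's appeal to $\overline{K}(\Pi)=K_{23}n_1^2+K_{13}n_2^2+K_{12}n_3^2$. Writing $u=\alpha E_2+\beta E_3$, you get $\overline{K}=\alpha^2\overline{K}(E_1,E_2)+\beta^2\overline{K}(E_1,E_3)+2\alpha\beta\langle R(E_1,E_2)E_3,E_1\rangle$, with only one cross term to check.
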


\begin{proof}
We have $\Psi_t = \partial_x $ and $\Psi_s = y' \partial_y + z' \partial_z$. The coefficients of the first fundamental form are 
$$
E = \langle \Psi_t, \Psi_t \rangle = e^{2z} \quad F = \langle \Psi_t, \Psi_s \rangle = 0 \quad G = \langle \Psi_s, \Psi_s \rangle = y'^2 e^{-2z} + z'^2.$$
Let $W=\sqrt{EG-F^2}=\sqrt{ y'^2 + z'^2 e^{2z} }$.
The unit normal vector field $N$ is 
\begin{equation}\label{n1}
N = \frac{1}{W} \left( z' e^z E_2 - y' E_3 \right).
\end{equation}

Next, we compute the coefficients of the second fundamental form $e, f, g$. We use the Levi-Civita connection $\nabla$ of $\text{Sol}_3$. Using \eqref{nabla}, we have 
\begin{equation*}
\nabla_{\partial_x} \partial_x = -e^{2z} \partial_z, \quad \nabla_{\partial_y} \partial_y = e^{-2z} \partial_z, \quad \nabla_{\partial_x} \partial_z = \partial_x, \quad \nabla_{\partial_y} \partial_z = -\partial_y
\end{equation*}
Taking the second derivatives of $\Psi$, we find
\begin{align*}
\nabla_{\Psi_t} \Psi_t &= -e^{2z} \partial_z \\
\nabla_{\Psi_t} \Psi_s &= z' \partial_x \\
\nabla_{\Psi_s} \Psi_s &= (y'' - 2y'z') \partial_y + \left( z'' + y'^2 e^{-2z} \right) \partial_z.
\end{align*}
Therefore 
\begin{align*}
e &= \langle \nabla_{\Psi_t} \Psi_t, N \rangle = \frac{y' e^{2z}}{W}, \\
f &= \langle \nabla_{\Psi_t} \Psi_s, N \rangle = 0, \\
g &= \langle \nabla_{\Psi_t} \Psi_s, N \rangle = - \frac{1}{W} \left( y' z'' - y'' z' + 2y'z'^2 + y'^3 e^{-2z} \right).
\end{align*}

The extrinsic Gauss curvature is given by $K_{ext} = \frac{eg - f^2}{EG - F^2}$. Since $f=0$, we have:
\begin{equation*}
K_{ext} = -\frac{y' e^{2z} \left( y' z'' - y'' z' + 2y'z'^2 + y'^3 e^{-2z} \right)}{W^4}.
\end{equation*}
To determine the intrinsic Gauss curvature $K_{int}$, we use the Gauss equation
\begin{equation*}
K_{int} = K_{ext} + \bar{K}(\Pi),
\end{equation*}
where $\bar{K}(\Pi)$ is the ambient sectional curvature of $\text{Sol}_3$ evaluated on the tangent plane $\Pi$ of $\Sigma$. 
 For a plane $\Pi$ in $\text{Sol}_3$ which is determined by its unit normal vector, $N = n_1 E_1 + n_2 E_2 + n_3 E_3$, the sectional curvature $\bar{K}(\Pi)$ is computed by the formula $\bar{K}(\Pi) = K_{23} n_1^2 + K_{13} n_2^2 + K_{12} n_3^2$, 
where $K_{ij}$ denotes the sectional curvature of the plane spanned by $\{E_i, E_j\}$. Since, $K_{12} = 1$, $K_{13} = -1$ and $K_{23} = -1$, and from \eqref{n1}, we arrive at $
\bar{K}(\Pi) = \frac{y'^2 - z'^2 e^{2z}}{W^2}$. This completes the proof.
\end{proof}

\begin{corollary} \label{cor23}
Let $\Sigma$ be an $F_1$-invariant surface in $\sol$, parametrized by \eqref{para1}. Assume the generating curve $\gamma(s) = (0, y(s), z(s))$ is parameterized by arc length, which implies that there exists a smooth function $\theta(s)$ such that
\begin{equation}\label{1pp}
\begin{split}
y'(s) e^{-z(s)} &= \cos\theta(s), \\
z'(s) &= \sin\theta(s). 
\end{split}
\end{equation}
Under these conditions, the extrinsic and intrinsic Gauss curvatures of $\Sigma$ are
\begin{align*}
K_{ext} &= -\theta' \cos\theta -\cos^2\theta, \\
K_{int} &= -\theta'\cos\theta - \sin^2\theta.
\end{align*}
\end{corollary}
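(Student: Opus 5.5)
We substitute the arc-length parametrization \eqref{1pp} directly into the formulas of the preceding Proposition. First, arc length for $\gamma$ in the metric of $\sol$ means $y'^2e^{-2z}+z'^2=1$. This is exactly what justifies the existence of $\theta$ with $y'e^{-z}=\cos\theta$ and $z'=\sin\theta$, by the standard lifting argument for smooth maps into the unit circle. Then $W^2=y'^2+z'^2e^{2z}=e^{2z}(y'^2e^{-2z}+z'^2)=e^{2z}$, so $W=e^{z}$ and $W^4=e^{4z}$.

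Next we compute the derivatives needed. We have $y'=e^z\cos\theta$ and $z'=\sin\theta$. Differentiating gives
\[
y''=e^z z'\cos\theta-e^z\theta'\sin\theta=e^z(\sin\theta\cos\theta-\theta'\sin\theta),
\]
and $z''=\theta'\cos\theta$. We then substitute into the bracket $B:=y'z''-y''z'+2y'z'^2+y'^3e^{-2z}$ term by term:
\begin{align*}
y'z''&=e^z\theta'\cos^2\theta,\\
y''z'&=e^z(\sin^2\theta\cos\theta-\theta'\sin^2\theta),\\
2y'z'^2&=2e^z\cos\theta\sin^2\theta,\\
y'^3e^{-2z}&=e^z\cos^3\theta.
\end{align*}
Collecting, $B=e^z\big(\theta'+\cos\theta\sin^2\theta+\cos^3\theta\big)=e^z(\theta'+\cos\theta)$.

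Hence $K_{ext}=-y'e^{2z}B/W^4=-e^{z}\cos\theta\, e^{2z}e^z(\theta'+\cos\theta)/e^{4z}=-\theta'\cos\theta-\cos^2\theta$, as claimed. For the intrinsic curvature we use the second formula of the Proposition: the ambient term is $(y'^2-z'^2e^{2z})/W^2=\cos^2\theta-\sin^2\theta$. Adding this to $K_{ext}$ gives $K_{int}=-\theta'\cos\theta-\sin^2\theta$.

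There is no real obstacle. The only place to be careful is the sign bookkeeping in $y''z'$, in particular the $-\theta'\sin^2\theta$ term, which must combine with $\theta'\cos^2\theta$ to produce $\theta'$. We should also check that the orientation of $N$ in \eqref{n1} is the one kept, since flipping it would change the sign of $K_{ext}$ only through $e$ and $g$ together, and so would not change $K_{ext}$ at all.
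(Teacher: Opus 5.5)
Your proposal is correct and takes the same route the paper intends. The paper states the corollary without a separate proof, as a direct substitution of the arc-length relations into the preceding Proposition: $W=e^{z}$, $z''=\theta'\cos\theta$, the bracket simplifies to $e^{z}(\theta'+\cos\theta)$, and the ambient term equals $\cos^2\theta-\sin^2\theta$, all of which you compute correctly.
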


\begin{proposition} \label{pr26}
Let $\Sigma$ be an $F_3$-invariant surface in $\sol$, parametrized by \eqref{para3}. Then, the extrinsic Gauss curvature $K_{ext}$ and the intrinsic Gauss curvature $K_{int}$ of $\Sigma$ are respectively given by
\begin{align*}
K_{ext} &= \frac{eg - f^2}{W^2}, \\
K_{int} &= K_{ext} + \frac{(xy' + yx')^2 - x'^2 - y'^2}{W^2},
\end{align*}
where $W =\sqrt{ x'^2 + y'^2 + (xy' + yx')^2}$, and the coefficients of the second fundamental form are:
\begin{align*}
e &= -\frac{1}{W} \left[ xy' - yx' - (y^2 - x^2)(xy' + yx') \right], \\
f &= \frac{1}{W} \left[ (xx' + yy')(xy' + yx') \right], \\
g &= -\frac{1}{W} \left[ x'y'' - x''y' - (y'^2 - x'^2)(xy' + yx') \right].
\end{align*}
\end{proposition}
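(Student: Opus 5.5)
The plan is to repeat the scheme used for $F_1$-invariant surfaces, but to work throughout in the orthonormal frame $\{E_1,E_2,E_3\}$. In that frame the coordinate vectors of \eqref{para3} become independent of $t$. Since $z=t$ along $\Psi$, we have $e_1=e^{t}E_1$ and $e_2=e^{-t}E_2$. Hence
$$\Psi_s=x'e^{-t}e_1+y'e^{t}e_2=x'E_1+y'E_2,\qquad \Psi_t=-xe^{-t}e_1+ye^{t}e_2+e_3=-xE_1+yE_2+E_3,$$
and the last expression is just $F_3$ restricted to $\Sigma$, as expected.

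\textbf{First fundamental form and normal.} The coefficients are $\langle\Psi_s,\Psi_s\rangle=x'^2+y'^2$, $\langle\Psi_s,\Psi_t\rangle=yy'-xx'$ and $\langle\Psi_t,\Psi_t\rangle=1+x^2+y^2$. The determinant is $(x'^2+y'^2)(1+x^2+y^2)-(yy'-xx')^2$. Expanding, the cross terms recombine as $x^2y'^2+y^2x'^2+2xx'yy'=(xy'+yx')^2$, which gives $W^2$ as stated. Taking the frame cross product of the components $(x',y',0)$ and $(-x,y,1)$, the unit normal is
$$N=\pm\frac{1}{W}\bigl(y'E_1-x'E_2+(xy'+yx')E_3\bigr),$$
with the sign fixed to match the stated signs of $e,f,g$.

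\textbf{Second fundamental form.} I compute covariant derivatives using \eqref{nabla}, differentiating the coefficient functions along the curve. This gives
$$\nabla_{\Psi_s}\Psi_s=x''E_1+y''E_2+(y'^2-x'^2)E_3.$$
Pairing with $N$ yields $g$: the terms $x''y'-y''x'$ together with $(y'^2-x'^2)(xy'+yx')$ reproduce the stated bracket. For $\nabla_{\Psi_t}\Psi_s$ and $\nabla_{\Psi_t}\Psi_t$, note that $\Psi_t$ has an $E_3$-component, so $\partial_t$ of the coefficients vanishes but the connection terms $\nabla_{E_i}E_j$ contribute, including the $E_3$-derivatives. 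For example, $\nabla_{\Psi_t}\Psi_s=x'\nabla_{\Psi_t}E_1+y'\nabla_{\Psi_t}E_2$, with $\nabla_{\Psi_t}E_1=-x\nabla_{E_1}E_1=xE_3$ and $\nabla_{\Psi_t}E_2=y\nabla_{E_2}E_2=yE_3$. This gives $(xx'+yy')E_3$, whose pairing with $N$ is $f$. Similarly, $\nabla_{\Psi_t}\Psi_t$ produces $-xE_1-yE_2+(y^2-x^2)E_3$ type terms, which give $e$. Then $K_{ext}=(eg-f^2)/W^2$ by definition. As a sanity check, since $\nabla_{\Psi_t}\Psi_s$ should equal $\nabla_{\Psi_s}\Psi_t$, I will also compute the latter directly.

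\textbf{Intrinsic curvature.} As in the $F_1$ case, I use the Gauss equation with $\bar K(\Pi)=K_{23}n_1^2+K_{13}n_2^2+K_{12}n_3^2=-(n_1^2+n_2^2)+n_3^2$. Substituting $n_1^2+n_2^2=(x'^2+y'^2)/W^2$ and $n_3^2=(xy'+yx')^2/W^2$ gives exactly the stated correction term.

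\textbf{Main obstacle.} The only delicate point is bookkeeping: keeping track of signs and orientation, and not forgetting the connection contributions coming from the $E_3$-component of $\Psi_t$. I will double-check these using the symmetry $\nabla_{\Psi_t}\Psi_s=\nabla_{\Psi_s}\Psi_t$.
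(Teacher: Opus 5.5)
Your proposal is correct and follows the paper's proof essentially line by line. You write $\Psi_s=x'E_1+y'E_2$ and $\Psi_t=-xE_1+yE_2+E_3$ in the orthonormal frame, get $W$ and $N=\frac{1}{W}(y'E_1-x'E_2+(xy'+yx')E_3)$ from the cross product, pair the three covariant derivatives computed from \eqref{nabla} with $N$, and finally apply the Gauss equation with $\bar K(\Pi)=-n_1^2-n_2^2+n_3^2$. Your extra check that $\nabla_{\Psi_s}\Psi_t=\nabla_{\Psi_t}\Psi_s$ (both equal $(xx'+yy')E_3$) is a harmless consistency test that the paper omits.
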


\begin{proof}
The tangent plane is spanned by 
$$\Psi_t = -x E_1 + y E_2 + E_3, \quad \Psi_s= x' E_1 + y' E_2.$$
The coefficients of the first fundamental form are
$$E = x^2 + y^2 + 1, \quad F = -xx' + yy', \quad 
G = \langle \Psi_s, \Psi_s \rangle = x'^2 + y'^2.$$
Now $W=\sqrt{ x'^2 + y'^2 + (xy' + yx')^2}$. The unit normal vector field $N$ is 
\begin{equation}\label{n3}
N = \frac{1}{W} \left( y' E_1 - x' E_2 +(xy' + yx') E_3 \right).
\end{equation}
On the other hand, we have
\begin{align*}
\nabla_{\Psi_t} \Psi_t &= -x E_1 - y E_2 + (y^2 - x^2) E_3, \\
\nabla_{\Psi_t} \Psi_s &= (xx' + yy') E_3, \\
\nabla_{\Psi_s} \Psi_s &= x'' E_1 + y'' E_2 + (y'^2 - x'^2) E_3,
\end{align*}
which, in combination with \eqref{n3} gives the desired expressions for $e, f, g$. For the intrinsic Gauss curvature, we compute $\bar{K}(\Pi)$. From \eqref{n3} and the expressions $K_{ij}$, we have $\bar{K}(\Pi) = \frac{1}{W^2} \left( -y'^2 - x'^2 + (xy' + yx')^2 \right)$.

\end{proof}

\begin{corollary} \label{cor25} 
Let $\Sigma$ be an $F_3$-invariant surface parametrized by \eqref{para3}. Assume the generating curve $\gamma(s) = (x(s), y(s), 0)$ is parameterized by arc length, which implies that there exists a smooth function $\theta(s)$ such that 
\begin{equation}\label{3pp}
\begin{split}
x'(s) &= \cos\theta(s), \\
y'(s)& = \sin\theta(s).
\end{split}
\end{equation} 
 Define the auxiliary function $u(s) = x(s)\sin\theta(s) + y(s)\cos\theta(s)$. Under these conditions, the coefficients of the second fundamental form are
\begin{align*}
e &= \frac{-x\sin\theta +y\cos\theta + u(y^2 - x^2)}{\sqrt{1+u^2}}, \\
f &= \frac{u(x\cos\theta + y\sin\theta)}{\sqrt{1+u^2}}, \\
g &= -\frac{\theta' + u(\cos^2\theta - \sin^2\theta)}{\sqrt{1+u^2}}.
\end{align*}
Using this, we have
$$K_{ext} = \frac{eg - f^2}{1 + u^2}, \quad K_{int} = K_{ext} + \frac{u^2 - 1}{1 + u^2}.$$
\end{corollary}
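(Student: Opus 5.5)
The corollary follows by substituting the arc-length relations \eqref{3pp} into Proposition \ref{pr26}, so the plan is simply to specialize each quantity in turn.

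First, I read "arc length" as meaning that the generating curve $\gamma(s)=(x(s),y(s),0)$ has unit speed in $\sol$. At $z=0$ we have $E_1=e_1$ and $E_2=e_2$, so $|\gamma'|^2=x'^2+y'^2$. Hence \eqref{3pp} holds for some smooth $\theta$, by the standard lifting of a smooth unit-speed planar curve. With $x'=\cos\theta$ and $y'=\sin\theta$, the combination appearing in Proposition \ref{pr26} becomes
\[
xy'+yx'=x\sin\theta+y\cos\theta=u .
\]
It follows that $W=\sqrt{x'^2+y'^2+u^2}=\sqrt{1+u^2}$.

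Next I substitute into the three coefficients.

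For $e$, the bracket is $xy'-yx'-(y^2-x^2)u$, which equals $x\sin\theta-y\cos\theta-u(y^2-x^2)$. The overall minus sign then gives the stated numerator.

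For $f$, note that $xx'+yy'=x\cos\theta+y\sin\theta$, so $f=u(x\cos\theta+y\sin\theta)/\sqrt{1+u^2}$ directly.

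For $g$, differentiate \eqref{3pp} to get $x''=-\theta'\sin\theta$ and $y''=\theta'\cos\theta$. Then $x'y''-x''y'=\theta'(\cos^2\theta+\sin^2\theta)=\theta'$. Also $y'^2-x'^2=\sin^2\theta-\cos^2\theta$, so the bracket is $\theta'-(\sin^2\theta-\cos^2\theta)u=\theta'+u(\cos^2\theta-\sin^2\theta)$. This gives the stated $g$.

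Finally, the curvatures. Since $W^2=1+u^2$, the extrinsic formula reads $K_{ext}=(eg-f^2)/(1+u^2)$. The ambient term $\big((xy'+yx')^2-x'^2-y'^2\big)/W^2$ becomes $(u^2-1)/(1+u^2)$, which yields $K_{int}$.

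There is no real obstacle here; the only point needing care is the signs in $e$ and $g$ when distributing the minus signs.
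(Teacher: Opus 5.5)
Your proposal is correct and takes the same route as the paper, which obtains the corollary by substituting \eqref{3pp} into Proposition \ref{pr26}, using $xy'+yx'=u$, $W=\sqrt{1+u^2}$ and $x'y''-x''y'=\theta'$. Your sign handling in $e$ and $g$ and your reduction of the ambient curvature term to $(u^2-1)/(1+u^2)$ are both right.
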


\begin{remark} The angle $\theta$ in the two curves parametrized by arc-length that appear in \eqref{1pp} and \eqref{3pp} has the following geometric meaning. For the curve \eqref{1pp}, its curvature is $\kappa= |\theta'(s) + \cos\theta(s)|$. For the curve in \eqref{3pp}, its geodesic curvature as a curve in the plane $z=0$ is $\kappa_g=\theta'(s)$ and its curvature as a curve in $\sol$ is $\kappa = \sqrt{(\theta')^2 + \cos^2(2\theta)}$.
\end{remark}

\section{Solitons of Gauss curvature flow}\label{s3}

Given an initial smooth surface $\psi\colon\Sigma\to \sol$, the Gauss curvature flow is a smooth map $\Psi
:( -\varepsilon ,\varepsilon ) \times \Sigma\rightarrow \sol$ that satisfies the following PDE
\begin{equation}
\left\{ 
\begin{split}
\frac{\partial \Psi }{\partial t}( t,p)& =-K( t,p),
N( t,p) \\ 
\Psi ( 0,p)& =\psi ( p).
\end{split}%
\right. \label{eq1}
\end{equation}
Let $X\in\mathfrak{X}(\sol)$ be a Killing vector field and let $\{\Phi_t\colon t\in\r\}$ be the one-parameter group of isometries associated with $X$, that is, $\Phi_t\colon\sol\to\sol$ is an isometry such that $\Phi_0=\mbox{id}$ and 
\begin{equation}\label{iso}
\frac{\partial \Phi_t(p) }{\partial t} =X( \Phi_t(p)), \quad t\in\r, p\in\Sigma.
\end{equation} Let us write
$$\Phi_t(p)=\Phi(t,p).$$

 \begin{definition} 
A family of immersions $\Psi: \Sigma \times [0, T) \to \sol$ evolving by the GCF is said to be a soliton if there exists a one-parameter group of isometries $\{\Phi_t\colon t\in\r\}$ of $\sol$ generated by a Killing vector field $X$, such that the image $\Sigma_t = \Psi(\Sigma, t)$ of $\Sigma$ under the flow satisfies
\begin{equation*}
\Sigma_t = \Phi_t(\Sigma_0)
\end{equation*}
for all $t \in [0, T)$, where $\Sigma_0 = \Psi(\Sigma, 0)$ is the initial surface. 
\end{definition}

To derive the stationary equation for such a soliton, we note that the condition $\Sigma_t = \Phi_t(\Sigma_0)$ implies that, up to a time-dependent reparametrization (diffeomorphism) $\sigma_t: \Sigma \to \Sigma$ with $\sigma_0 = id_\Sigma$, the flow can be written as
\begin{equation*}
\Psi(t, p) = \Phi(t, \sigma_t(p)).
\end{equation*}
Differentiating this expression with respect to $t$ and evaluating at $t=0$, we obtain by the chain rule
\begin{equation*}
\begin{split}
\frac{\partial \Psi}{\partial t}(0, p) &= \frac{\partial \Phi}{\partial t}(0, \sigma_0(p)) + (d\Phi_0)_p \left( d\psi \left( \left. \frac{\partial \sigma_t(p)}{\partial t} \right|_{t=0} \right) \right)\\
&=X(p) + d\psi ( V(p) ),
\end{split}
\end{equation*}
where we have used \eqref{iso}, $d\Phi_0 = \mbox{id}$, and we have set $V(p)= \left. \frac{\partial \sigma_t(p)}{\partial t} \right|_{t=0} $.
 Substituting into \eqref{eq1}, we have
$$
X (p)+ d\psi_p(V_p)= -K(p)N(p),
$$
for all $p\in\Sigma$. 
Multiplying by $N$ and taking into account that $d\psi_p(V_p)$ is tangent to $\Sigma$, we arrive at the soliton equation \eqref{eqsol2}. We have proved the following resut: 

\begin{proposition} Let $\Sigma$ be a surface in $\sol$ and let $X$ be a Killing vector field. Then $\Sigma$ is a soliton with respect to $X$ if and only if $\Sigma$ satisfies
\begin{equation}\label{eq2}
K=-\langle N,X\rangle .
\end{equation}
\end{proposition}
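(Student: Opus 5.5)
The statement has two directions, and both come from comparing the normal components of the flow velocity and of the Killing field $X$. I take the derivation just before the statement as the model for the forward implication and reverse it for the converse.

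\emph{Soliton $\Rightarrow$ equation \eqref{eq2}.} Assume $\Sigma_t=\Phi_t(\Sigma_0)$. Then $\Phi_t^{-1}\circ\Psi(t,\cdot)$ parametrizes $\Sigma_0$ for each $t$. Since it is a smooth family of immersions with the same image, and equals $\psi$ at $t=0$, I can write it locally as $\psi\circ\sigma_t$ with $\sigma_t$ diffeomorphisms and $\sigma_0=\mathrm{id}$. This lifting is the one point that needs justification: $\psi$ is locally an embedding, so I project onto $\Sigma_0$ locally. Hence $\Psi(t,p)=\Phi(t,\psi(\sigma_t(p)))$. Differentiating at $t=0$ and using \eqref{iso} together with $d\Phi_0=\mathrm{id}$ gives
$$\frac{\partial\Psi}{\partial t}(0,p)=X(p)+d\psi_p(V_p),\qquad V_p=\left.\frac{\partial\sigma_t(p)}{\partial t}\right|_{t=0}.$$
Equating this with $-K(p)N(p)$ from \eqref{eq1} and taking the inner product with $N$ removes the tangential term $d\psi_p(V_p)$. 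Because $|N|=1$, this yields $K=-\langle N,X\rangle$.

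\emph{Equation \eqref{eq2} $\Rightarrow$ soliton.} Suppose $K=-\langle N,X\rangle$ on $\Sigma_0$. Define the tangential field $T=-X^{\top}$ on $\Sigma_0$, so that $-KN=X^{\perp}+X^{\top}-X=X+T$ after using the equation. Let $\sigma_t$ be the flow of $d\psi^{-1}(T)$ on $\Sigma$; this is a local flow if $\Sigma$ is not compact. Set $\tilde\Psi(t,p)=\Phi_t(\psi(\sigma_t(p)))$. Then
$$\partial_t\tilde\Psi=X(\tilde\Psi)+d\Phi_t\bigl(T(\sigma_t(p))\bigr).$$
Since $\Phi_t$ is an isometry, it preserves the equation $K=-\langle N,X\rangle$; here I use that $X$ is invariant under its own flow, $d\Phi_t X=X\circ\Phi_t$. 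It also carries $N$ and $K$ of $\Sigma_0$ to those of $\Sigma_t$, whether $K$ is extrinsic or intrinsic. The same pointwise decomposition therefore holds at each time, and $\partial_t\tilde\Psi=-K N$. So $\tilde\Psi$ solves \eqref{eq1} with image $\Phi_t(\Sigma_0)$, which is exactly the definition of a soliton.

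\emph{Main obstacle.} The hard step is the one the paper glosses over. For the forward direction it is the existence of the reparametrizations $\sigma_t$. For the converse it is the invariance check under $\Phi_t$, together with, if one insists, uniqueness for the GCF. I would state the equivalence at the level of existence of such a solution rather than attempt a uniqueness argument.
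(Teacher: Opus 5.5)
Your forward direction is the paper's argument. The paper also writes the flow as $\Psi(t,p)=\Phi(t,\sigma_t(p))$, differentiates at $t=0$ to get $X+d\psi(V)=-KN$, and pairs with $N$. Your remark that $\sigma_t$ comes from locally lifting $\Phi_t^{-1}\circ\Psi(t,\cdot)$ through $\psi$ only makes explicit what the paper assumes.

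The real difference is the converse. The paper derives only the necessary condition and then states the equivalence. It never shows that a surface satisfying $K=-\langle N,X\rangle$ generates a soliton. Your construction $\tilde\Psi(t,p)=\Phi_t(\psi(\sigma_t(p)))$, with $\sigma_t$ the flow of $d\psi^{-1}(-X^{\top})$, supplies that missing half, and it works. Apply the linear map $d\Phi_t$ to the identity $X-X^{\top}=X^{\perp}=\langle N,X\rangle N=-KN$ at $\psi(\sigma_t(p))$. Then use $d\Phi_t X=X\circ\Phi_t$, the invariance of $K$ (extrinsic or intrinsic) under isometries and reparametrizations, and the choice $d\Phi_t N$ as the normal of $\Sigma_t$. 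These give $\partial_t\tilde\Psi=-KN$ exactly; the choice of normal is consistent with the parametrization, since $\Phi_t$ and $\sigma_t$ are isotopic to the identity. You are also right that no uniqueness theory for the (generally non-parabolic) GCF is needed, because the definition of soliton only asks for existence of such a flow.

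Two small points need fixing.
\begin{itemize}
\item The chain ``$-KN=X^{\perp}+X^{\top}-X=X+T$'' is a slip, since the middle expression is $0$. You mean $-KN=X^{\perp}=X-X^{\top}=X+T$.
\item When $\Sigma$ is non-compact and $d\psi^{-1}(T)$ is not complete, $\sigma_t$ is defined only on a $t$-dependent open subset of $\Sigma$. Then the image of $\tilde\Psi(t,\cdot)$ is an open piece of $\Phi_t(\Sigma_0)$, not all of it, so your claim ``with image $\Phi_t(\Sigma_0)$'' needs that completeness as a hypothesis. Alternatively, adopt the usual convention that the flow is taken modulo tangential reparametrizations. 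In that case $\Phi_t\circ\psi$ already works and has image exactly $\Phi_t(\Sigma_0)$.
\end{itemize}
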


In the following result, and for invariant surfaces, we obtain the expressions for $\langle N,X\rangle$ for invariant surfaces, that is, the right-hand sides of \eqref{eq2}. The proof is immediate from \eqref{kf}, \eqref{n1} and \eqref{n3}.

\begin{lemma} Let $\Sigma$ be an $F_k$-invariant surface, $k=1,3$, parametrized by \eqref{para1} or \eqref{para3}, and such that the generating curves are parametrized by \eqref{1pp} or \eqref{3pp}.
\begin{enumerate}
\item If $k=1$, then 
 \begin{equation}\label{eq17}
 \begin{split}
 \langle N,F_1\rangle&=0,\\
 \langle N,F_2\rangle&=\sin\theta\, e^{-z},\\
 \langle N,F_3\rangle&=y\sin\theta\, e^{-z}-\cos\theta.
 \end{split}
 \end{equation}
\item If $k=3$, then
 \begin{equation}\label{eq18}
 \begin{split}
 \langle N,F_1\rangle&=\frac{\sin\theta}{\sqrt{1+u^2}} e^t,\\
 \langle N,F_2\rangle&=-\frac{\cos\theta}{\sqrt{1+u^2}} e^{-t},\\
 \langle N,F_3\rangle&=0.
 \end{split}
 \end{equation}
\end{enumerate}
\end{lemma}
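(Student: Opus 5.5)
The plan is a direct computation. In each case I will write both the unit normal $N$ and the Killing fields $F_k$ in the orthonormal frame $\{E_1,E_2,E_3\}$, simplify using the arc-length conditions \eqref{1pp} or \eqref{3pp}, and take the Euclidean dot product of the coefficient vectors. The expressions for $F_k$ come from \eqref{kf}; the normals come from \eqref{n1} and \eqref{n3}. No earlier result is needed beyond these formulas.

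\emph{Case $k=1$.} By \eqref{1pp} we have $y'=e^{z}\cos\theta$ and $z'=\sin\theta$. Then
$$W^2=y'^2+z'^2e^{2z}=e^{2z},$$
so $W=e^{z}$, and \eqref{n1} simplifies to $N=\sin\theta\,E_2-\cos\theta\,E_3$. At the point $\Psi(s,t)=(t,y(s),z(s))$, formula \eqref{kf} gives
$$F_1=e^{z}E_1,\qquad F_2=e^{-z}E_2,\qquad F_3=-t\,e^{z}E_1+y\,e^{-z}E_2+E_3.$$
Taking inner products yields $0$, $\sin\theta\,e^{-z}$ and $y\sin\theta\,e^{-z}-\cos\theta$ respectively. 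The $x$-coordinate $t$ of $F_3$ drops out because $N$ has no $E_1$-component.

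\emph{Case $k=3$.} By \eqref{3pp}, $x'=\cos\theta$ and $y'=\sin\theta$, so $xy'+yx'=u$ and $W=\sqrt{1+u^2}$. Hence \eqref{n3} becomes
$$N=\frac{1}{\sqrt{1+u^2}}\bigl(\sin\theta\,E_1-\cos\theta\,E_2+u\,E_3\bigr).$$
The point to watch is that the Killing fields are evaluated at $\Psi(s,t)=(x e^{-t},y e^{t},t)$, whose $z$-coordinate is $t$. Substituting these coordinates into \eqref{kf} gives
$$F_1=e^{t}E_1,\qquad F_2=e^{-t}E_2,\qquad F_3=-xe^{-t}e^{t}E_1+ye^{t}e^{-t}E_2+E_3=-xE_1+yE_2+E_3.$$
The first two inner products are then $\sin\theta\,e^{t}/\sqrt{1+u^2}$ and $-\cos\theta\,e^{-t}/\sqrt{1+u^2}$. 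For the third, $F_3$ coincides with $\Psi_t$ as computed in the proof of Proposition \ref{pr26}, so it is tangent to the surface. Directly, $\langle N,F_3\rangle$ is proportional to $-x\sin\theta-y\cos\theta+u$, which vanishes by the definition of $u$.

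The only delicate step is this bookkeeping of the base point in the $F_3$-invariant case. The frame $E_i$ depends on $z$, so the factors $e^{\pm t}$ must be tracked correctly. One must also confirm that the frame components of $\Psi_s$ and $\Psi_t$ used to derive \eqref{n3} are the ones valid at height $z=t$. This check is exactly what the proof of Proposition \ref{pr26} already performs, so it can be quoted; everything else is a routine substitution.
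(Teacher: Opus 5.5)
Your proposal is correct and takes the same route as the paper, which states only that the identities are immediate from \eqref{kf}, \eqref{n1} and \eqref{n3}. Your explicit simplifications $W=e^{z}$ and $W=\sqrt{1+u^2}$, together with evaluating the Killing fields at the base point of height $z=t$ in the $F_3$-invariant case (which gives $F_3=\Psi_t$), are exactly the details the paper leaves to the reader, and they check out.
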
 
 
As a consequence of the fact that $\langle N,F_k \rangle=0$ for an $F_k$-invariant surface, the following result is immediate. 
 
 \begin{theorem}\label{t34}
An $F_k$-invariant surface in $\sol$, $1\leq k\leq 3$, is an $F_k$-soliton of the GCF if and only if it is flat. 
\end{theorem}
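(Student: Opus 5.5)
The plan is to show that the right-hand side of the soliton equation \eqref{eq2} vanishes identically whenever the surface is invariant under the same Killing field that defines the soliton. Once that is established, the equation reduces to $K=0$. This holds for both the extrinsic and the intrinsic Gauss curvature, and "flat" in the statement is read in whichever sense of $K$ is being used.

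The first step is the general geometric fact. Let $\Sigma$ be $F_k$-invariant, with $\{\Phi_t\}$ the one-parameter group generated by $F_k$, so that $\Phi_t(\Sigma)=\Sigma$ for all $t$. For $p\in\Sigma$, the curve $t\mapsto\Phi_t(p)$ stays in $\Sigma$, so its velocity at $t=0$ is tangent to $\Sigma$. By \eqref{iso} this velocity is $F_k(p)$, hence $F_k(p)\in T_p\Sigma$ and $\langle N,F_k\rangle=0$ on $\Sigma$. This is consistent with the explicit parametrizations. For \eqref{para1}, $\Psi_t=\partial_x=F_1$. For \eqref{para3}, $\Psi_t=-xe^{-t}\partial_x+ye^{t}\partial_y+\partial_z=F_3$ evaluated along $\Psi$. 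For $k=2$, the parametrization $(x(s),t,z(s))$ gives $\Psi_t=\partial_y=F_2$. One can also read the vanishing off directly from \eqref{eq17} and \eqref{eq18}, where $\langle N,F_1\rangle=0$ for $k=1$ and $\langle N,F_3\rangle=0$ for $k=3$.

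The second step is to substitute this into \eqref{eq2} with $X=F_k$, which gives $K=-\langle N,F_k\rangle=0$. So $\Sigma$ is an $F_k$-soliton exactly when $K\equiv0$, i.e. when $\Sigma$ is flat. The converse uses the same identity: if $K\equiv 0$, then both sides of \eqref{eq2} vanish. For $k=2$ one could alternatively invoke the isometry $(x,y,z)\mapsto(y,x,-z)$, which carries $F_1$-invariant surfaces to $F_2$-invariant ones. That route would require checking that the isometry sends the Killing field $F_1$ to $F_2$ (up to sign) and preserves $K_{ext}$ and $K_{int}$, so the direct tangency argument above is cleaner and covers all $k$ uniformly.

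There is no real obstacle here. The only point needing care is to note that invariance of the set $\Sigma$ under the flow of $F_k$ indeed forces tangency of $F_k$, which is immediate from the definition of an invariant surface. It is also worth noting that the sign or orientation choice of $N$ plays no role, since the right-hand side vanishes for either orientation.
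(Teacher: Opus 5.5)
Your proposal is correct and matches the paper's argument. The paper likewise notes that $F_k$ is tangent to any $F_k$-invariant surface, so that $\langle N,F_k\rangle=0$ (as read off from \eqref{eq17}--\eqref{eq18}), and the soliton equation \eqref{eq2} then reduces to $K\equiv 0$ for either the extrinsic or the intrinsic curvature; your explicit check that $\Psi_t=F_k$ in each parametrization, including $k=2$, is a harmless and slightly more complete addition.
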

 
We finish this section by providing in Table \ref{ex1}, examples of invariant solitons of the GCF. To verify this, we use Corollaries \ref{cor23} and \ref{cor25}, together with \eqref{eq17} and \eqref{eq18}.

 \begin{table}[h]
\centering

\begin{tabular}{@{}llccc@{}}
\toprule
 & & \multicolumn{3}{c}{ Soliton type } \\ \cmidrule(l){3-5} 
Invariance & Gauss curvature & $F_1$ & $F_2$ & $F_3$ \\ \midrule
 {$F_1$-Invariant} & Intrinsic & --- & $z=c$ & --- \\
 & Extrinsic & --- & --- & $z=c, y=0$ \\ \addlinespace
 {$F_3$-Invariant} & Intrinsic & --- & --- & --- \\
 & Extrinsic & $y=0$ & $x=0$ & --- \\ \bottomrule
\end{tabular}
\caption{Solitons obtained by planes parallel to the coordinate planes. }\label{ex1}
\end{table}
\section{$F_3$-invariant surfaces: the case of $F_1$ and $F_2$-solitons}\label{s4}

In this section, we obtain a full classification of all $F_3$-invariant solitons of the GCF. The key to these arguments is that the left-hand side of \eqref{eq2} is independent of the parameter $t$ due to the invariance of the surface. In contrast, the right-hand side, the inner product $\langle N,F_k\rangle$, depends on $t$. In the first result, we consider $F_1$-solitons, and in the second one, $F_2$-solitons.

\begin{theorem}\label{t41}
The vertical plane $y=0$ is the only $F_3$-invariant extrinsic $F_1$-soliton. Furthermore, there are no $F_3$-invariant intrinsic $F_1$-solitons.
\end{theorem}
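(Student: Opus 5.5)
We follow the strategy announced at the start of this section: separate the dependence on the parameter $t$. Let $\Sigma$ be an $F_3$-invariant surface parametrized by \eqref{para3}, with generating curve parametrized by arc length as in \eqref{3pp}. By Corollary \ref{cor25}, both $K_{ext}$ and $K_{int}$ are functions of $s$ alone. By \eqref{eq18}, the soliton equation \eqref{eq2} for $X=F_1$ reads
$$K(s)=-\frac{\sin\theta(s)}{\sqrt{1+u(s)^2}}\,e^t \quad\text{for all } (s,t).$$
Fix $s$ and differentiate in $t$. This gives $\sin\theta(s)\,e^t=0$, so $\sin\theta\equiv 0$ on $I$, and consequently $K\equiv0$. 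Since $\theta$ is continuous, $\theta$ is constant, equal to $0$ or $\pi$ modulo $2\pi$. Thus $x'=\pm1$ and $y'=0$, so the generating curve is the horizontal line $y=c$, $z=0$, with $x$ arbitrary.

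Next we use the condition $K=0$ to determine $c$. With $\theta$ constant and $\sin\theta=0$ we have $u=y\cos\theta=\pm c$, which is constant. Plugging into Corollary \ref{cor25} with $\theta'=0$ and $\cos\theta=\pm1$ (the signs are routine to track) gives
$$e=\frac{c\cos\theta+u(c^2-x^2)}{\sqrt{1+c^2}},\qquad f=\frac{u\,x\cos\theta}{\sqrt{1+c^2}},\qquad g=-\frac{u}{\sqrt{1+c^2}}.$$

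\emph{Extrinsic case.} We need $eg-f^2=0$ for all $x$. The coefficient of $x^2$ in $eg-f^2$ is $u^2-u^2=0$. The remaining term is $-u\bigl(c\cos\theta+uc^2\bigr)$, which is proportional to $c^2(1+c^2)$. Hence $c=0$, and the surface is $\{(xe^{-t},0,t)\}$, that is, the vertical plane $y=0$. Conversely, Table \ref{ex1} confirms that $y=0$ is an extrinsic $F_1$-soliton.

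\emph{Intrinsic case.} We need
$$K_{int}=\frac{eg-f^2+u^2-1}{1+u^2}=0.$$
Using the computation above, $eg-f^2=-c^2(1+c^2)/(1+c^2)=-c^2$. Then the numerator is $-c^2+c^2-1=-1\neq0$, a contradiction, so no intrinsic $F_1$-soliton exists. Alternatively, if the bookkeeping looks delicate, one can observe directly that the surface $y=c$ is a horizontal line swept by the scaling isometry, and compute its intrinsic curvature from the induced metric.

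The main obstacle is the sign and algebra bookkeeping in $eg-f^2$. The key points are that the $x$-dependence must cancel identically and that the constant term must force $c=0$ in the extrinsic case and fail to vanish in the intrinsic case. If this cancellation looks suspicious, we would recompute $e$, $f$, $g$ directly from the proof of Proposition \ref{pr26}, taking $x'=\pm1$ and $y'=0$.
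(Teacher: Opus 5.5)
Your proof is correct and is essentially the paper's argument: the $t$-dependence of $\langle N,F_1\rangle$ forces $\sin\theta\equiv 0$ and hence $K\equiv 0$. Corollary~\ref{cor25} applied to the line $y=c$ then gives $K_{ext}=-c^2/(1+c^2)$ and $K_{int}=-1/(1+c^2)$, so $c=0$ in the extrinsic case and there is a contradiction in the intrinsic one. One small precision point, which does not affect the argument: for $c\neq 0$ the swept surface is $y=ce^{z}$, not a plane $y=c$. Your alternative remark, like the paper, phrases this loosely.
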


\begin{proof} Let $\Sigma$ be an $F_3$-invariant surface parametrized by \eqref{para3} such that the generating curve $\gamma$ is parametrized by \eqref{3pp}.

Suppose $\Sigma$ is an $F_1$-soliton (either extrinsic or intrinsic). In \eqref{eq18}, we have the expression for $\langle N,F_1\rangle$. Thus, \eqref{eq2} is 
\begin{equation}\label{con3}
K = -\frac{\sin\theta(s)}{\sqrt{1+u(s)^2}}e^t,
\end{equation}
where $K \in \{K_{ext}, K_{int}\}$ is the corresponding Gauss curvature of the surface. However, $K_{ext}$ and $K_{int}$ depend only on the arc length parameter $s$ (Corollary \ref{cor25}). Thus, the right-hand side in \eqref{con3} must be zero. This implies $$\sin\theta(s) = 0 \quad \text{for all } s,$$
and consequently, $K=0$.

We now investigate the two types of Gauss curvature. From $\sin\theta = 0$ and $\cos\theta = \pm 1$, we deduce that $x'(s) = \pm 1$ and $y'(s) = 0$. Integrating these relations yields $x(s) = \pm s + x_0$ and $y(s) = y_0$ for some constants $x_0, y_0 \in \r$. Then $\Sigma$ is the vertical plane $y=y_0$. 

The auxiliary function $u(s) = x\sin\theta + y\cos\theta$ becomes $u(s) = \pm y_0$, which is constant. The denominator is $W = \sqrt{1+y_0^2}$. Using the formulas for the second fundamental form with $\theta' = 0$ and constant $u$, we obtain 
$$e = \pm \frac{y_0(1+y_0^2-x^2)}{W},\quad f = \frac{y_0 x}{W},\quad g = \mp \frac{y_0}{W}.$$
 The numerator of $K_{ext}$ is $eg - f^2 = -y_0^2$. Thus, the extrinsic and intrinsic curvatures are constants given by
$$K_{ext} = \frac{-y_0^2}{1+y_0^2}, \quad K_{int} = K_{ext} + \frac{y_0^2 - 1}{1+y_0^2} = \frac{-1}{1+y_0^2}.$$
Thus, $K_{ext}=0$ implies $y_0=0$ and $\gamma(s) = (\pm s + x_0, 0, 0)$. Then $\Sigma$ is the vertical plane $y = 0$. The converse is known such is shown in Table \ref{ex1}.

The equation $K_{int} = 0$ yields a contradiction, concluding that no such intrinsic $F_1$-soliton exists.
\end{proof}

 \begin{theorem}\label{t42}
 The vertical plane $x=0$ is the only $F_3$-invariant extrinsic $F_2$-soliton. Furthermore, there are no $F_3$-invariant intrinsic $F_2$-solitons.
\end{theorem}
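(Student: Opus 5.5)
The plan is to mirror the proof of Theorem \ref{t41}, replacing $F_1$ by $F_2$. Let $\Sigma$ be an $F_3$-invariant surface parametrized by \eqref{para3}, with generating curve parametrized by arc length as in \eqref{3pp}. If $\Sigma$ is an $F_2$-soliton, then by \eqref{eq18} the soliton equation \eqref{eq2} reads
$$K=\frac{\cos\theta(s)}{\sqrt{1+u(s)^2}}\,e^{-t}.$$
By Corollary \ref{cor25}, both $K_{ext}$ and $K_{int}$ depend only on $s$. The right-hand side depends on $t$ through $e^{-t}$, so it must vanish. Hence $\cos\theta\equiv 0$ and $K\equiv 0$.

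From $\cos\theta=0$ we get $\sin\theta=\pm1$, so $x'=0$ and $y'=\pm1$. Integrating gives $x=x_0$ and $y=\pm s+y_0$, and $\Sigma$ is the vertical plane $x=x_0$. Then $u=x\sin\theta+y\cos\theta=\pm x_0$ is constant and $\theta'=0$. I will substitute these into the formulas of Corollary \ref{cor25}, using $\sin^2\theta=1$ and $\cos\theta=0$:
$$e=\frac{\mp x_0+u(y^2-x_0^2)}{\sqrt{1+x_0^2}},\quad f=\frac{\pm u y}{\sqrt{1+x_0^2}},\quad g=\frac{u}{\sqrt{1+x_0^2}}.$$
With $u=\pm x_0$ this gives $eg=-x_0^2+x_0^2(y^2-x_0^2)$ divided by $(1+x_0^2)$. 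I will double-check this using $u\cdot(\mp x_0)=-x_0^2$ and $u^2=x_0^2$. Also $f^2=x_0^2y^2/(1+x_0^2)$. Therefore
$$eg-f^2=\frac{-x_0^2-x_0^4}{1+x_0^2}=-x_0^2,$$
so
$$K_{ext}=-\frac{x_0^2}{1+x_0^2},\qquad K_{int}=K_{ext}+\frac{x_0^2-1}{1+x_0^2}=-\frac{1}{1+x_0^2}.$$

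The two cases then follow directly. $K_{ext}=0$ forces $x_0=0$, so $\Sigma$ is the plane $x=0$, and the converse is recorded in Table \ref{ex1}. On the other hand, $K_{int}$ never vanishes, so there are no intrinsic $F_2$-solitons.

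An alternative route is to apply an isometry exchanging the roles of $F_1$ and $F_2$ while preserving $F_3$-invariance, for instance $(x,y,z)\mapsto(y,x,-z)$, which sends $F_3$ to $-F_3$. However, the unit normal and sign conventions need care there, so the direct computation is safer.

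The main obstacle is just the careful sign bookkeeping in $e,f,g$ with $\sin\theta=\pm1$. The key point to verify is that the $y$-dependent terms cancel in $eg-f^2$, which the computation above indicates they do.
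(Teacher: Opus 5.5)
Your proof is correct and follows the paper's argument essentially step by step. The $e^{-t}$ dependence forces $\cos\theta\equiv 0$, so the generating curve is $(x_0,\pm s+y_0,0)$. Then $eg-f^2=-x_0^2$ gives $K_{ext}=-x_0^2/(1+x_0^2)$ and $K_{int}=-1/(1+x_0^2)$.

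One harmless slip, which the paper also makes: for $x_0\neq 0$ the $F_3$-orbit of that line is the surface $xe^{z}=x_0$, not the vertical plane $x=x_0$. This identification is never used in the curvature computation, and it is correct once $x_0=0$.
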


\begin{proof}
By \eqref{eq18}, the soliton equation \eqref{eq2} is
$$K = \frac{\cos\theta(s)}{\sqrt{1+u(s)^2}}e^{-t},$$
where $K \in \{K_{ext}, K_{int}\}$. Since both $K_{ext}$ and $K_{int}$ depend only on the arc length parameter $s$, and the right-hand side of the equation is a function of $s$ alone, we must have
$$\cos\theta(s) = 0 \quad \text{for all } s,$$
and $K=0$. 

From $\cos\theta = 0$ and $\sin\theta = \pm 1$, we deduce that $x'(s) = 0$ and $y'(s) = \pm 1$. Integrating these relations yields $x(s) = x_0$ and $y(s) = \pm s + y_0$ for some constants $x_0, y_0 \in \r$. Then $\Sigma$ is the plane $x=x_0$. The auxiliary function $u(s) = x\sin\theta + y\cos\theta$ becomes $u(s) = \pm x_0$, which is constant. The denominator is $W = \sqrt{1+x_0^2}$.

Using the formulas for the second fundamental form with $\theta' = 0$ and constant $u$, we have
$$e = \frac{\mp x_0(1 + x_0^2 - y(s)^2)}{W}, \quad f = \frac{x_0 y(s)}{W}, \quad g = \frac{\pm x_0}{W}.$$
This gives $eg - f^2 = -x_0^2$. Thus, 
$$K_{ext} = \frac{-x_0^2}{1+x_0^2}, \quad K_{int} = K_{ext} + \frac{x_0^2 - 1}{1+x_0^2} = \frac{-1}{1+x_0^2}.$$
 If $\Sigma$ is an extrinsic $F_2$-soliton, then $x_0=0$. The generating curve is $\gamma(s) = (0, \pm s + y_0, 0)$ and the corresponding invariant surface is the vertical plane $x = 0$. Moreover, this plane is an extrinsic $F_2$-soliton (see Table \ref{ex1}).

 If $\Sigma$ is an intrinsic $F_2$-soliton, since $K_{int} = 0$, we arrive at a contradiction, confirming that no such intrinsic $F_2$-soliton exists.
\end{proof}
 
 We emphasize the case of intrinsic solitons.
 
 \begin{corollary} There are no $F_3$-invariant intrinsic $F_1$- and $F_2$-solitons of the GCF in $\sol$.
 \end{corollary}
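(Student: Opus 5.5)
This corollary is a direct consequence of Theorems \ref{t41} and \ref{t42}. I will take their intrinsic parts and combine them, and I will also make sure the underlying argument holds up.

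Let $\Sigma$ be $F_3$-invariant, parametrized by \eqref{para3}, with generating curve parametrized by arc length as in \eqref{3pp}. By \eqref{eq18}, the soliton equation \eqref{eq2} for $k=1$ (respectively $k=2$) reads
$$K_{int}(s)=-\frac{\sin\theta(s)}{\sqrt{1+u(s)^2}}e^{t} \quad \text{(respectively } K_{int}(s)=\frac{\cos\theta(s)}{\sqrt{1+u(s)^2}}e^{-t}\text{)}.$$
By Corollary \ref{cor25}, the left-hand side depends only on $s$. Fixing $s$ and varying $t$, the right-hand side can only be independent of $t$ if $\sin\theta\equiv0$ (respectively $\cos\theta\equiv0$). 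In that case $K_{int}\equiv0$.

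Next I integrate \eqref{3pp}. In the $F_1$ case this gives $y\equiv y_0$, so $\Sigma$ is the vertical plane $y=y_0$. In the $F_2$ case it gives $x\equiv x_0$, so $\Sigma$ is the plane $x=x_0$. In either case $\theta'=0$, and $u$ is the constant $\pm y_0$ (respectively $\pm x_0$). I substitute these into the formulas of Corollary \ref{cor25}; the main check is that $eg-f^2=-c^2$, where $c$ is that constant. Adding the ambient term $(u^2-1)/(1+u^2)$ then gives
$$K_{int}=\frac{-1}{1+c^2}<0.$$
This contradicts $K_{int}=0$, so neither type of intrinsic soliton exists. This step is already carried out in the proofs of Theorems \ref{t41} and \ref{t42}.

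The only delicate point is the separation-of-variables step, which must be done pointwise in $s$. This is immediate because $e^{\pm t}$ is not constant in $t$. As a sanity check on the sign, note that vertical planes through the origin of the form $y=0$ or $x=0$ are totally geodesic. Their intrinsic curvature therefore equals the ambient sectional curvature $K_{13}=-1$ (respectively $K_{23}=-1$), which agrees with $-1/(1+c^2)$ at $c=0$.
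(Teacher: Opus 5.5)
Your proposal is correct and follows exactly the paper's argument. The corollary is just the intrinsic halves of Theorems \ref{t41} and \ref{t42}: the $t$-independence of $K$ forces $\sin\theta\equiv0$ (resp.\ $\cos\theta\equiv0$), and Corollary \ref{cor25} then gives $K_{int}=-1/(1+c^2)\neq 0$.

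One harmless slip, inherited from the paper: the $F_3$-orbit of the line $y=y_0$ in $\{z=0\}$ is the surface $y=y_0e^{z}$ (resp.\ $x=x_0e^{-z}$), which is a vertical plane only when the constant is $0$. Since you compute $K_{int}$ from Corollary \ref{cor25} for the actual invariant surface, the contradiction is unaffected; its induced metric $e^{2z}dx^2+(1+y_0^2)\,dz^2$ does have curvature $-1/(1+y_0^2)$.
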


 \section{$F_1$-invariant surfaces: the case of $F_2$-solitons}\label{s5}

We begin with the study of $F_1$-invariant solitons of the GCF. In this section, we focus on $F_2$-solitons, and in the next one, we deal with $F_3$-solitons. For $F_2$-solitons, we express the soliton equation \eqref{eq2} in terms of the generating curve. The following result is straightforward from \eqref{eq17} and Corollary \ref{cor23}.

\begin{proposition} \label{pr61}
Let $\Sigma$ be an $F_1$-invariant surface in $\sol$ generated by a curve parametrized by arc-length according to \eqref{1pp}. 
Suppose that $\Sigma$ is an $F_2$-soliton.
\begin{enumerate}
 \item If $\Sigma$ is an extrinsic soliton, then $\theta$ and $z$ satisfy the equation
 \begin{equation}\label{121}
 \theta' \cos\theta+ \cos^2\theta = \sin\theta\, e^{-z}.
 \end{equation}
 \item If $\Sigma$ is an intrinsic soliton, then $\theta$ and $z$ satisfy the equation
 \begin{equation}\label{122}
 \theta'\cos\theta + \sin^2\theta = \sin\theta\, e^{-z}.
 \end{equation}
\end{enumerate}
\end{proposition}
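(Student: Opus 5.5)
The proof is a direct substitution into the soliton equation \eqref{eq2} with $X=F_2$. Its left-hand side is $K=-\langle N,F_2\rangle$, and by the first part of \eqref{eq17} we have $\langle N,F_2\rangle=\sin\theta\,e^{-z}$ for an $F_1$-invariant surface whose generating curve is parametrized by \eqref{1pp}. So the soliton condition becomes $K=-\sin\theta\,e^{-z}$, with $K$ the extrinsic or the intrinsic Gauss curvature as appropriate.

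Next I insert the expressions from Corollary \ref{cor23}. In the extrinsic case $K_{ext}=-\theta'\cos\theta-\cos^2\theta$, so the condition reads
$$-\theta'\cos\theta-\cos^2\theta=-\sin\theta\,e^{-z}.$$
Multiplying by $-1$ gives \eqref{121}. In the intrinsic case $K_{int}=-\theta'\cos\theta-\sin^2\theta$, and the same step gives \eqref{122}.

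The only point to check is that the sign conventions agree. The unit normal $N$ in \eqref{n1}, which the curvature formulas of Corollary \ref{cor23} are computed with, must be the same normal used in \eqref{eq17}. Under \eqref{1pp} we have $y'=e^{z}\cos\theta$ and $z'=\sin\theta$, so $W=e^{z}$. Then \eqref{n1} becomes $N=\sin\theta\,E_2-\cos\theta\,E_3$. Since $F_2=e^{-z}E_2$ by \eqref{kf}, this gives $\langle N,F_2\rangle=\sin\theta\,e^{-z}$, so the two conventions are consistent. I expect no real obstacle beyond this check.
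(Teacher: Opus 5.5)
Your proposal is correct and is exactly the paper's argument: the paper substitutes $\langle N,F_2\rangle=\sin\theta\,e^{-z}$ from \eqref{eq17} and the formulas for $K_{ext}$ and $K_{int}$ from Corollary \ref{cor23} into \eqref{eq2}. Your orientation check is also right, although only the right-hand side actually depends on the choice of $N$. Both $K_{ext}$ (the determinant of the shape operator) and $K_{int}$ are unchanged under $N\mapsto -N$.
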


From now on, we will assume that the generating curve of $F_1$-invariant surfaces is parametrized according to \eqref{1pp}.

In the following two results, we investigate the case where the angle function $\theta$ is linear. We distinguish whether or not $\theta$ is a constant function. 

\begin{theorem}\label{t52}
Let $\Sigma$ be an $F_1$-invariant surface in $\sol$ generated by a curve $\gamma$. Suppose that the angle function is a constant, $\theta(s)=c$.
\begin{enumerate}
 \item The surface $\Sigma$ cannot be an extrinsic $F_2$-soliton.
 \item If $\Sigma$ is an intrinsic $F_2$-soliton, then $\Sigma$ is a horizontal plane $z = z_0$.
\end{enumerate}
\end{theorem}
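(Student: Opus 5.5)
With $\theta\equiv c$ we have $\theta'=0$, so the plan is to substitute this into the two soliton equations of Proposition \ref{pr61}. In each case the left-hand side becomes a constant, while the right-hand side $\sin c\, e^{-z(s)}$ depends on $s$ through $z$. We also use that \eqref{1pp} gives $z'=\sin c$, so $z(s)=s\sin c+z_0$. Hence $e^{-z}$ is nonconstant unless $\sin c=0$.

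\emph{Extrinsic case.} Equation \eqref{121} becomes $\cos^2 c=\sin c\, e^{-z(s)}$, with constant left-hand side.
If $\sin c\neq 0$, then $z$ is strictly monotone, so $\sin c\,e^{-z}$ is not constant. This contradicts the constancy of the left-hand side.
If $\sin c=0$, the right-hand side vanishes, giving $\cos^2 c=0$. This is impossible because $\sin^2c+\cos^2c=1$.
So no extrinsic $F_2$-soliton with constant angle exists, which proves (1).

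\emph{Intrinsic case.} Equation \eqref{122} becomes $\sin^2 c=\sin c\, e^{-z(s)}$.
If $\sin c\neq0$, dividing by $\sin c$ gives $\sin c=e^{-z(s)}$. The left side is constant but the right side is strictly monotone in $s$, a contradiction.
Hence $\sin c=0$. The equation then holds identically, and $z'=\sin c=0$ gives $z\equiv z_0$.
In addition $y'=\pm e^{z_0}$, so the generating curve is the horizontal line $z=z_0$ in the $yz$-plane. Its $F_1$-orbit is the horizontal plane $z=z_0$, consistent with Table \ref{ex1}. This proves (2).

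The only delicate step is the appeal to $z'=\sin c$ to rule out $e^{-z}$ being constant when $\sin c\neq 0$. This is immediate from \eqref{1pp}, so no real obstacle is expected.
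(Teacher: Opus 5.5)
Your proof is correct and follows essentially the same route as the paper. You substitute $\theta'=0$ into \eqref{121} and \eqref{122}, use $z'=\sin c$ to exclude $\sin c\neq 0$, and note that $\sin c=0$ is impossible in the extrinsic case and gives the horizontal plane $z=z_0$ in the intrinsic case. The only cosmetic difference is that you integrate $z'=\sin c$ to get strict monotonicity of $e^{-z}$, whereas the paper argues that constancy of $e^{-z}$ forces $z'=0$, contradicting $z'=\sin c\neq 0$.
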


\begin{proof}
 \begin{enumerate}
\item Suppose $\Sigma$ is an extrinsic $F_2$-soliton. 
Substituting $\theta = c$ and $\theta' = 0$ in \eqref{121}, we get $\cos^2 c = \sin c\, e^{-z(s)}$. In consequence, $\sin c \neq 0$. Since the right-hand side is constant, $z(s)$ must be constant, yielding $z'(s) = 0$. However, this is a contradiction because $z'(s) = \sin c \neq 0$. Thus, no such surface exists.

\item Suppose $\Sigma$ is an intrinsic $F_2$-soliton. Substituting $\theta = c$ in \eqref{122}, we obtain $\sin^2 c = \sin c\, e^{-z(s)}$. 
If $\sin c \neq 0$, then $z(s)$ is constant, but this gives a contradiction with the identity $z'(s) = \sin c$.
Therefore, we must have $\sin c = 0$. In this case, $z'(s) = 0$, meaning $z(s) = z_0$ for some constant $z_0$. The condition $y'(s) = e^{z(s)}\cos c$ becomes $y'(s) = \pm e^{z_0}$, which integrates to $y(s) = \pm s e^{z_0} + y_0$. The curve $\gamma$ is a straight line that generates a horizontal plane $z = z_0$. 
\end{enumerate}
\end{proof}

\begin{theorem}
Let $\Sigma$ be an $F_1$-invariant surface in $\sol$ generated by a curve $\gamma$. Suppose the angle function satisfies $\theta'(s) = c$ for some non-zero constant $c \in \r$. Then $\Sigma$ can neither be an extrinsic nor an intrinsic $F_2$-soliton.
\end{theorem}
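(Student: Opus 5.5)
The plan is to exploit the fact that a linear angle function lets us integrate $z$ explicitly, which turns the soliton equations of Proposition \ref{pr61} into identities between explicit functions of $\theta$ alone. Write $\theta(s)=cs+\theta_0$ with $c\neq 0$. From \eqref{1pp} we have $z'(s)=\sin\theta(s)$. Since $\theta'=c$, we get $dz/d\theta=\sin\theta/c$, hence
$$z=-\frac{\cos\theta}{c}+z_0,\qquad e^{-z}=A\,e^{\cos\theta/c},\quad A=e^{-z_0}>0 .$$
Because $\theta$ is a non-constant affine function of $s$, as $s$ ranges over the open interval $I$ the angle $\theta$ sweeps an open interval $J$.

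Substituting into \eqref{121} and \eqref{122} gives, for all $\theta\in J$,
\begin{align*}
c\cos\theta+\cos^2\theta&=A\sin\theta\, e^{\cos\theta/c} &&\text{(extrinsic)},\\
c\cos\theta+\sin^2\theta&=A\sin\theta\, e^{\cos\theta/c} &&\text{(intrinsic)}.
\end{align*}
Both sides are real-analytic functions of $\theta$ on all of $\mathbb{R}$. An identity holding on the open interval $J$ therefore holds for every $\theta\in\mathbb{R}$. This analytic-continuation step is the only point needing care: the curve may live on a short arc, so we cannot directly evaluate at convenient angles. Invoking the identity theorem for real-analytic functions resolves this.

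We then evaluate at special angles. In the extrinsic case, $\theta=0$ gives $c+1=0$, so $c=-1$. Then $\theta=\pi$ gives $-c+1=2$ on the left and $0$ on the right, a contradiction. In the intrinsic case, $\theta=\pi/2$ gives $1=A$. Then $\theta=0$ gives $c=0$, contradicting $c\neq0$. Hence neither type of $F_2$-soliton exists with $\theta'=c\neq 0$.
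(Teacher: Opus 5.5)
Your proof is correct, but it takes a different route from the paper's. The paper never integrates $z$. It differentiates \eqref{121} (resp.\ \eqref{122}) once, multiplies by $\sin\theta$, and uses the original equation to replace $\sin\theta\,e^{-z}$. This eliminates $z$ entirely and leaves a trigonometric polynomial identity in $\theta$: $-8c^2-8c\cos\theta-\cos(4\theta)+1=0$ in the extrinsic case, and $-8c^2+4c\cos\theta-4c\cos(3\theta)-4\cos(2\theta)+\cos(4\theta)+3=0$ in the intrinsic case. The paper then simply declares a contradiction. The unstated reason is that $\theta$ sweeps an interval, while these polynomials have a nonzero $\cos(4\theta)$ coefficient and so cannot vanish there.

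You instead eliminate $z$ by integrating $z'=\sin\theta$ against $\theta'=c$. This costs you the extra constant $A$ and an exponential factor. In exchange, once you invoke the identity theorem, the conclusion reduces to one-line evaluations. In the extrinsic case, $\theta=0$ and $\theta=\pi$ give $c=-1$ and $c=1$. In the intrinsic case, $\theta=0$ alone already gives $c=0$, so your evaluation at $\theta=\pi/2$ is superfluous. You also make explicit the interval-to-global rigidity step that the paper leaves implicit.

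The paper's differentiation trick has its own advantage: it never requires solving for $z$, and it produces a purely algebraic obstruction. The price is heavier trigonometric bookkeeping. Your version is shorter and fully justified.
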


\begin{proof}
 Assume $\theta'(s) = c \neq 0$, which implies that $\theta(s) = cs + \theta_0$. 

\begin{enumerate}
\item Suppose $\Sigma$ is an extrinsic $F_2$-soliton. 
Equation \eqref{121} becomes $c\cos\theta + \cos^2\theta = \sin\theta\, e^{-z}$. 
Since $\theta(s)$ is non-constant, we deduce that $\sin\theta \neq 0$ almost everywhere. Differentiating both sides gives
$$c(-c\sin\theta - 2\sin\theta\cos\theta) = c\cos\theta\, e^{-z} - \sin^2\theta\, e^{-z}.$$
To eliminate the $e^{-z}$ terms, we multiply \eqref{121} by $\sin\theta$ and substitute $\sin\theta\, e^{-z} = c\cos\theta + \cos^2\theta$ from the original equation,
$$-c^2\sin^2\theta - 2c\sin^2\theta\cos\theta = c\cos\theta(c\cos\theta + \cos^2\theta) - \sin^2\theta(c\cos\theta + \cos^2\theta).$$
This equation reduces to $-8 c^2-8 c \cos \theta -\cos (4 \theta )+1=0$. This is a contradiction. 

\item Suppose $\Sigma$ is an intrinsic $F_2$-soliton. Equation \eqref{122} gives $c\cos\theta + \sin^2\theta = \sin\theta\, e^{-z}$. 
Differentiating both sides yields
$$c(-c\sin\theta + 2\sin\theta\cos\theta) = c\cos\theta\, e^{-z} - \sin^2\theta\, e^{-z}.$$
Multiplying by $\sin\theta$ and substituting $\sin\theta\, e^{-z} = c\cos\theta + \sin^2\theta$ gives
$$-c^2\sin^2\theta + 2c\sin^2\theta\cos\theta = c\cos\theta(c\cos\theta + \sin^2\theta) - \sin^2\theta(c\cos\theta + \sin^2\theta),$$
or equivalently, 
$$ -8 c^2+4 c \cos \theta-4 c \cos (3 \theta )-4 \cos (2 \theta )+\cos (4 \theta )+3 =0.$$
This is a contradiction. 
\end{enumerate}
\end{proof}

 We now study the general case for the angle function $\theta(s)$.
 
 \begin{theorem}\label{t54}
Let $\gamma(s)=(y(s),z(s))$ be a generating curve of an $F_1$-invariant extrinsic $F_2$-soliton. Then $\gamma$ is a global graph on the $y$-axis, and the function $z$ is given by the implicit integral
\begin{equation}\label{eqf1}
\int \frac{e^{-z}}{1 + W\left( A e^{-\frac{1}{2} e^{-2z}} \right)} dz = s + C,
\end{equation}
where $A, C$ are constants and $W$ is the Lambert function. The coordinate $y(s)$ is determined, up to a constant, by 
$$y(s) = \int \pm e^{z(s)} \sqrt{1 - z'(s)^2}\, ds.$$
\end{theorem}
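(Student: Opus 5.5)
The plan is to turn the extrinsic soliton equation \eqref{121} of Proposition \ref{pr61} into an autonomous second-order ODE for $z(s)$ alone, and then integrate it twice. The graph property comes first and is cheap. Suppose $\cos\theta(s_0)=0$ at some $s_0$. Then the left-hand side of \eqref{121} vanishes at $s_0$, while the right-hand side equals $\pm e^{-z(s_0)}\neq 0$, which is a contradiction. Hence $\cos\theta$ never vanishes. By \eqref{1pp}, $y'=e^{z}\cos\theta$ then has constant sign, so $\gamma$ is a global graph over the $y$-axis. The same relations give $\cos\theta=\pm\sqrt{1-\sin^2\theta}=\pm\sqrt{1-z'^2}$, so $y(s)=\int \pm e^{z}\sqrt{1-z'^2}\,ds$ up to a constant. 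This settles the last assertion once $z(s)$ is known.

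Next I eliminate $\theta$. From $z'=\sin\theta$ we get $z''=\theta'\cos\theta$ and $\cos^2\theta=1-z'^2$, so \eqref{121} becomes
$$z''+1-z'^2=z'e^{-z}.$$
This is autonomous, so I will take $z$ as the independent variable and write $p(z)=z'$, which gives $z''=p\,dp/dz$. The result is the first-order equation $p\,p_z=p^2-1+p\,e^{-z}$, an Abel equation of the second kind. To simplify it I change variable to $\tau=e^{-z}$ and introduce a new unknown $w$ through a linear relation of the form $p=e^{z}(1+w)$, that is, $1+w=p\tau$ or a suitable rescaling of it.

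The goal of this substitution is to reach the separable equation
$$\frac{dw}{d\sigma}=-\sigma\,\frac{w}{1+w}$$
for an appropriate variable $\sigma$, which should essentially be $\tau=e^{-z}$. Separating variables gives $\frac{1+w}{w}\,dw=-\sigma\,d\sigma$. Integrating yields $\ln w+w=-\sigma^2/2+c$, that is, $we^{w}=Ae^{-\sigma^2/2}$, so that $w=W\bigl(Ae^{-\frac12 e^{-2z}}\bigr)$. Then $dz/ds=p=e^{z}(1+w)$ integrates to $\int \frac{e^{-z}}{1+W(\cdots)}\,dz=s+C$, which is exactly \eqref{eqf1}.

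The main obstacle is finding the precise substitution, including the correct scaling of $p$ and $\tau$, that linearizes the Abel equation into the Lambert form. My plan is to work backwards from \eqref{eqf1}. I will set $z'=e^{z}(1+w)$ with $w=W(Ae^{-\frac12e^{-2z}})$ and use the Lambert identity $dW/dx=W/(x(1+W))$ to compute $z''$. I will then check it against $z''+1-z'^2=z'e^{-z}$, adjusting constants and the sign convention of $\theta$ until the identity holds, and only then present the forward derivation. The other points need care but are routine: the excluded equilibria ($w=0$, and the lines $z'=\pm1$, which are impossible because $\cos\theta\ne0$), and the sign choice in $y'$, which is fixed on all of $I$ by the non-vanishing of $\cos\theta$.
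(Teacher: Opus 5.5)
Your proposal is correct and follows the paper's proof essentially step by step. The same contradiction at $\cos\theta=0$ gives the graph property, the same autonomous equation $z''-z'^2-z'e^{-z}+1=0$ is reduced in order with $z$ as independent variable, and your $1+w=pe^{-z}$ is exactly the paper's $h=ve^{-z}$, which yields $hh_z e^{2z}=h-1$ and hence $(h-1)e^{h-1}=A\exp(-\frac{1}{2}e^{-2z})$. No rescaling or sign adjustment is needed: with $\sigma=e^{-z}$ your equation $dw/d\sigma=-\sigma w/(1+w)$ holds verbatim and is just the paper's separation $\frac{h}{h-1}\,dh=e^{-2z}\,dz$, and, as in the paper, the reduction of order is valid on intervals where $z'\neq 0$.
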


\begin{proof}
Suppose that $\gamma'$ is vertical at some point $s$. Then $\cos\theta(s)=0$ and \eqref{121} gives a contradiction. This proves that $\gamma$ is a graph on the $y$-axis. On the other hand, $z''=\theta'\cos\theta$ and $\cos^2\theta=1-z'^2$. Thus equation \eqref{121} becomes
\begin{equation}\label{211}
z'' - z'^2 - z' e^{-z} + 1 = 0.
\end{equation}
 Since the independent parameter $s$ does not appear explicitly in \eqref{211}, we can reduce its order by treating $z$ as the independent variable by introducing the function $v(z) = z'(s)$, with $v=v(z)$. Using the chain rule, the second derivative of $z$ with respect to $s$ can be rewritten as
 \begin{equation}\label{eq211}
v v' - v^2 - v e^{-z} + 1 = 0, \quad \text{with } v \neq 0.
\end{equation} 
By applying the change of variable $h(z) = v e^{-z}$, the ODE is 
$$hh'e^{2z}=h-1.$$
Solving by separation of variables, we arrive at 
$$
h + \log(h-1) = -\frac{1}{2}e^{-2z} + C_1.
$$
We now use the Lambert $W$ function, which is defined as the inverse of $x\mapsto xe^x$. Thus we need to write the above equation as
$$h-1 + \log(h-1) = -\frac{1}{2}e^{-2z} + C_1-1,$$
or equivalently, 
$$(h-1)e^{h-1}=A\exp(-\frac{1}{2}e^{-2z}),\quad A=e^{C_1-1}.$$
 Thus 
 $$h(z)=1+ W\left( A e^{-\frac{1}{2} e^{-2z}} \right).$$
Since $z' = v = he^z$, then 
\begin{equation*}
z' = e^z \left[ 1 + W\left( A \exp\left( -\frac{1}{2} e^{-2z} \right) \right) \right]
\end{equation*}
This gives \eqref{eqf1} by separation of variables.

\end{proof}

We now describe the geometric properties of the generating curves of $F_1$-invariant intrinsic $F_2$-solitons. We restrict this study to the case where the generating curve has a point with a horizontal tangent vector.

 \begin{theorem} \label{t55}
Let $\Sigma$ be an $F_1$-invariant extrinsic $F_2$-soliton generated by a curve $\gamma$ which is a solution of \eqref{1pp}--\eqref{121} with initial conditions $y(0)=y_0$, $z(0)=z_0$ and $\theta(0)=0$. Then $\gamma$ has the following geometric properties: 
\begin{enumerate}
 \item The curve $\gamma$ cannot be a horizontal line.
 \item The point $s=0$ is a strict global maximum for the height function $z(s)$. 
 \item For $s>0$, $\gamma$ ends at a finite arc-length and a finite height with vertical velocity ($\theta \to -\pi/2$).
 \item For $s<0$, $\gamma$ descends infinitely ($z \to -\infty$) and in the $(y,z)$ plane, $\gamma$ is asymptotic to a straight line of slope $ 1$.
\end{enumerate}
\end{theorem}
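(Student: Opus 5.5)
The plan is to work with the first-order system \eqref{1pp}--\eqref{121}, written as
$$y'=e^{z}\cos\theta,\qquad z'=\sin\theta,\qquad \theta'=\frac{\sin\theta\, e^{-z}-\cos^2\theta}{\cos\theta},$$
together with the second-order equation \eqref{211}, $z''=z'^2+z'e^{-z}-1$. We also use the first integral from the proof of Theorem~\ref{t54} for $h=z'e^{-z}=\sin\theta\,e^{-z}$. Since the region of interest has $h<1$, we rewrite it as
$$h+\log(1-h)=-\tfrac12 e^{-2z}+\tfrac12 e^{-2z_0},$$
where the constant is fixed by $h(0)=0$.

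Items (1) and (2) follow directly from \eqref{211}. If $\gamma$ were a horizontal line, then $z'\equiv 0$, and \eqref{211} would give $1=0$, which is impossible. More generally, at any critical point of $z$ we get $z''=-1<0$. Hence every critical point of $z$ is a strict local maximum. A smooth function on an interval cannot have two strict local maxima without a local minimum between them. Therefore $s=0$ is the only critical point, so $z'>0$ for $s<0$ and $z'<0$ for $s>0$, and $s=0$ is a strict global maximum. In terms of the angle, this means $\theta\in(0,\pi/2)$ for $s<0$ and $\theta\in(-\pi/2,0)$ for $s>0$. By Theorem~\ref{t54}, $\cos\theta$ never vanishes on the domain.

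For item (3), take $s>0$, so $\sin\theta<0$ and $z\le z_0$. Then $e^{-z}\ge e^{-z_0}$ and
$$\theta'=\tan\theta\, e^{-z}-\cos\theta\le e^{-z_0}\tan\theta<0.$$
Fix $s_1>0$ and set $\delta=-\theta(s_1)>0$. For $s\ge s_1$ we have $\theta'\le -e^{-z_0}\tan\delta$, a negative constant. So $\theta$ reaches $-\pi/2$ at some finite $s^*$, unless the solution stops existing earlier. The latter cannot happen while $\cos\theta$ stays bounded away from $0$, because the right-hand side is then smooth and bounded. Since $|z'|\le 1$ and $|y'|\le e^{z_0}$, the height and the arc-length remain finite as $s\to s^*$. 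As $\theta\to-\pi/2$ we have $\cos\theta\to 0$ and $\theta'\to-\infty$, so the equation degenerates and the curve ends with vertical velocity. The extension past $s^*$ is excluded by Theorem~\ref{t54}.

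For item (4), take $s<0$ and run the flow backwards. Then $h>0$, and $h<1$ throughout, since $\log(1-h)$ must remain finite. Because $z$ is increasing in $s$, $z$ decreases as $s$ decreases. Three steps remain.
\begin{enumerate}
\item Show that the solution exists for all $s<0$. This requires showing that $\theta$ stays away from $\pi/2$, using $\sin\theta=he^{z}\le e^{z_0}$ together with the first integral.
\item Show that $z\to-\infty$. Suppose instead that $z$ had a finite limit. Then $z'\to 0$ along a sequence, and \eqref{211} would force $z''\to -1$ there. This contradicts convergence of $z$.
\item Use the first integral: as $z\to-\infty$, the right-hand side tends to $-\infty$, so $h\to 1^-$. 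Consequently $\sin\theta=he^{z}\to 0$, so $\theta\to 0$.
\end{enumerate}
The slope of $\gamma$ in the $(y,z)$-plane is
$$\frac{dz}{dy}=\frac{z'}{y'}=\frac{\sin\theta\,e^{-z}}{\cos\theta}=\frac{h}{\cos\theta},$$
and this tends to $1$.

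The main obstacle is upgrading ``slope $\to 1$'' to ``asymptotic to a line'', i.e.\ showing that $z-y$ has a finite limit. Viewing $z$ as a function of $y$, we have
$$\frac{d(z-y)}{dy}=\frac{h}{\cos\theta}-1,$$
and this must be integrable in $y$. The first integral gives $1-h\approx \exp\!\bigl(-1-\tfrac12e^{-2z}+\tfrac12e^{-2z_0}\bigr)$, which is super-exponentially small in $-z$. Also $1-\cos\theta=O(e^{2z})$. Since $z\sim y$ along the branch, both error terms are integrable in $y$, which gives the asymptotic line of slope $1$.
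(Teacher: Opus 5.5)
Your proof is correct. For items (1)--(3) it follows the paper: critical points of $z$ satisfy $z''=-1$, and for $s>0$ a monotone quantity is driven to the boundary in finite time. You track $\theta$ via $\theta'\le e^{-z_0}\tan\theta$, whereas the paper tracks $v=z'$ in the $(v,w)=(z',e^{-z})$ phase plane.

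Item (4) is where you genuinely differ. The paper integrates $f'=w(f-1)$ for $f=vw$ (your $h$) in the variable $s$, and derives $w\to\infty$ from the bound $w(s)\ge w(0)e^{-s}$. That bound is reversed: $w'/w=-v\ge-1$ gives $w(s)\le w(0)e^{-s}$ for $s<0$. In fact $w'=-vw=-f$, so $w$ grows only linearly. The conclusions still hold, because $w\ge w(0)$ already forces $f\to1$, and then $w'=-f$ forces $w\to\infty$.

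Your conserved quantity $h+\log(1-h)+\frac12e^{-2z}\equiv\frac12e^{-2z_0}$ ties $h$ directly to $z$, so you never need a growth rate for $w$. It also gives you two things the paper does not prove:
\begin{enumerate}
\item Global existence of the $s<0$ branch. The paper simply assumes this by writing $0<v\le1$.
\item Explicit super-exponential decay of $1-h$ in $-z$. Together with $1-\cos\theta\le e^{2z}$, this is what lets you show that $z-y$ converges.
\end{enumerate}
The paper stops at ``slope $\to1$''. In exchange, its phase-plane formulation exhibits the global structure, namely the saddles $(\pm1,0)$ and their stable manifolds, which it uses in the subsequent remark.

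Three spots need one more line each.
\begin{enumerate}
\item In step~(a), the bound $\sin\theta=he^z\le e^{z_0}$ does nothing when $z_0\ge0$. The first integral settles it. At a point with $\sin\theta=1$ you would have $h=e^{-z}=:w<1$, so the conserved constant would equal $w+\log(1-w)+\frac12w^2$. This is negative on $(0,1)$, since it vanishes at $0$ and has derivative $-w^2/(1-w)$. But the constant is $\frac12e^{-2z_0}>0$.
\item In step~(b), ``$z''\to-1$ along a sequence with $z'\to0$'' is not by itself a contradiction without some uniformity, such as a bound on $z'''$. It is cleaner to note that $(e^{-z})'=-h$, and that $h(s)\ge h(-1)>0$ for $s\le-1$ because $h'=e^{-z}(h-1)<0$. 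Hence $e^{-z}$ grows at least linearly.
\item Before invoking $z\sim y$, record that $y\to-\infty$. A slope $dz/dy\to1$ stays bounded, which is incompatible with $z\to-\infty$ over a bounded $y$-interval.
\end{enumerate}
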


\begin{proof} 
As established in \eqref{eq211}, if $v=z'$, then $v=v(s)$ satisfies the second-order equation $v' = v^2 + v e^{-z} - 1$. Let us introduce the function $w(s) = e^{-z(s)} > 0$ and define a planar autonomous dynamical system by 
\begin{equation}\label{fase1}
\begin{cases}
v' = v^2 + vw - 1, \\
w' = -vw.
\end{cases}
\end{equation}
 
 The asymptotic behavior as $z \to +\infty$ corresponds to the limit $w \to 0$. The equilibrium points of the system are $P_1 = (1, 0)$ and $P_2 = (-1, 0)$. Since $v^2=1$, then $\theta(s)=\pm\frac{\pi}{2}$ and these points represent generating curves that are vertical lines. By computing the Jacobian matrix associated with \eqref{fase1}, it is not difficult to see that $P_1$ and $P_2$ are saddle points.

Denote by $\eta(s)=(v(s),w(s))$ the trajectory associated with a solution $(y(s),z(s),\theta(s))$ of \eqref{1pp}--\eqref{121}. In such a case, $(v,w)\in \Theta:=[-1,1]\times (0,\infty)$. Note that $P_1$ and $P_2$ belong to the boundary of $\Theta$.
\begin{enumerate}
\item If $\gamma$ is a horizontal line, then $v = 0$ and thus $v' = 0$. However, the first equation of \eqref{fase1} yields a contradiction. The result is also a consequence of Theorem \ref{t52}.
 \item At $s=0$, we have $z'(0)=0$ and by \eqref{211}, $z''(0)=-1<0$. Thus $s=0$ is a strict local maximum. Since $z''<0$ for any critical point, no more local maximum can exist by the Rolle's theorem, making $s=0$ the strict global maximum.

\item For $s > 0$, $v(s)$ initially becomes negative because $v'(0)=-1$. As long as $v \in (-1, 0)$ and $w > 0$, the first equation of \eqref{fase1} gives $v' = (v^2 - 1) + vw<0$. Furthermore, since $w' = -vw > 0$, $w$ is strictly increasing, making $v'$ even more negative. Consequently, $v(s)$ strictly decreases without asymptotic limits in $(-1, 0)$. Since $v(s) \ge -1$, the trajectory $\eta$ must reach the boundary $v = -1$ of $\Theta$. Evaluating the vector field at this boundary gives $v'_{|v=-1} = -w < 0$. Since $\eta$ is strictly transverse to $v=-1$, it reaches this state in finite time $s_1 > 0$ at some finite value $w_1 > w_0$. Geometrically, this implies $\gamma$ cannot be smoothly extended further attaining a height $z_1 = -\log(w_1)$. Moreover, since $v'\to -1$, then $\gamma$ becomes vertical.

\item For $s < 0$, since $v(0)=0$ and $v'(0)=-1$, the function $v(s)$ must be strictly positive. Since $0<v\leq 1$, then the second equation of \eqref{fase1} yields $\frac{w'}{w}\geq -1$. Thus $w(s)\geq w(0)e^{-s}$. Letting $s\to-\infty$, we derive $\lim_{s\to-\infty}w(s)=+\infty$, which implies $z \to -\infty$. 

{\it Claim:} $\lim_{s \to -\infty} vw = 1$. Let us define $f(s) = v(s)w(s)$. Differentiating $f$ with respect to $s$ and using the system \eqref{fase1}, we obtain
$$f'=(z''-z'^2)e^{-z}=e^{-z}(f-1).$$
 Integrating from $s$ to $0$, with $s < 0$, we get:
\begin{equation*}
\begin{split}
 \log|f(0) - 1| - \log|f(s) - 1| &= \int_{s}^{0} w(\tau) \, d\tau \geq w(0) \int_{s}^{0} e^{-\tau} \, d\tau\\
 &=w(0)(e^{-s}-1).
 \end{split}
 \end{equation*}
Since $f(0) = 0$, and $h'<0$ for $f<1$, it follows that $f(s) < 1$ for all $s < 0$. Thus
$$ -\log(1 - f(s)) \geq w(0)(e^{-s}-1).$$
Letting $s\to-\infty$, we conclude that $\lim_{s \to -\infty} f(s) = 1$. This proves the claim. 

Finally, from the claim, $\lim_{s\to -\infty}v(s) = 0$ because $\lim_{s\to-\infty}w(s)=+\infty$. The slope of $\gamma$, viewed as the graph $z=z(y)$, is
$$ \frac{dz}{dy} = \frac{z'(s)}{y'(s)} = \frac{v(s)}{ \frac{1}{w(s)} \sqrt{1-v(s)^2}} = \frac{v(s)w(s)}{\sqrt{1-v(s)^2}} \to 1,\quad\mbox{as $s\to-\infty$}.$$
Consequently, the branch of $\gamma$ for $s<0$ descends infinitely while becoming asymptotic to a straight line of slope $1$.
 
\end{enumerate}
\end{proof}

 \begin{remark}
The previous theorem describes the family of generating curves that possess a point with a horizontal tangent ($v=0$). We may use an argument by contradiction to prove that any trajectory must contain points with a horizontal tangent. The reasoning would be the following. Suppose there exists a trajectory $\eta(s)=(v(s),w(s))$ such that $v\not=0$ for all $s$. Without loss of generality, suppose $v>0$. By the second equation of \eqref{fase1}, $w$ is monotonic decreasing, and because $w>0$, there is some $w_\infty\geq 0$ such that $w\to w_\infty$ and $w'\to 0$. If $w_\infty>0$, 
the first equation of \eqref{fase1} implies that $v\to 0$. Now, the first equation of \eqref{fase1} 
yields $v'\sim -1$ at the limit. Thus, necessarily, $\eta$ must cross the line $v=0$, obtaining a contradiction. Thus, we 
have proved that $w_\infty=0$. However, we can neither deduce that $v\to 0$ nor establish 
the monotonicity of $v$ using the first equation of \eqref{fase1}. This is a situation that 
really can occur. This trajectory corresponds to the stable manifolds of the saddle points 
$P_1=(1,0)$ or $P_2=(-1,0)$. For the generating curve, a branch of it strictly ascends 
without ever attaining a global maximum. Moreover, its tangent vector tends to be vertical 
because $v \to 1$ ($\theta \to \pi/2$). An analogous strictly decreasing solution exists associated with $P_2$.
\end{remark}

 Now we do a similar treatment for intrinsic $F_2$-solitons.
 
\begin{theorem}\label{t57}
Let $\Sigma$ be an $F_1$-invariant surface in $\sol$. Then $\Sigma$ is an intrinsic $F_2$-soliton if and only if the height function $z(s)$ satisfies
\begin{equation} \label{eq27}
z'' + z'^2 - z' e^{-z} = 0.
\end{equation}
Moreover, either $\Sigma$ is a horizontal plane or $z(s)$ is implicitly determined by
\begin{equation}\label{eq24}
s = e^{-C} \operatorname{Ei}(z + C) + A,
\end{equation}
where $\operatorname{Ei}(u) = \int_{-\infty}^u \frac{e^t}{t} dt$ is the exponential integral function and $A$ is a constant. In the latter case, a branch of $\gamma$ is asymptotic to a horizontal line at finite height $z = -C$ and the height of the other branch goes to $+\infty$.
\end{theorem}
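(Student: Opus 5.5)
The plan is to turn the intrinsic soliton equation \eqref{122} of Proposition \ref{pr61} into an autonomous second-order ODE for the height $z$, following the pattern of Theorem \ref{t54}. From \eqref{1pp} we have $z'=\sin\theta$, and differentiating gives $z''=\theta'\cos\theta$. Substituting $\theta'\cos\theta=z''$, $\sin^2\theta=z'^2$ and $\sin\theta=z'$ into \eqref{122} gives \eqref{eq27} directly.

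For the converse, I would start from a solution $z$ of \eqref{eq27} with $|z'|\le 1$. I would define $\theta$ by $\sin\theta=z'$, choosing $\cos\theta$ of constant sign on the interval, and recover $y$ from $y'=e^{z}\cos\theta$. The same substitution read backwards then gives \eqref{122}, so $\Sigma$ is an intrinsic $F_2$-soliton. At points where $\cos\theta=0$ the identity $z''=\theta'\cos\theta$ still holds, so no information is lost there.

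To solve \eqref{eq27}, I would use that $s$ does not appear explicitly and set $v(z)=z'(s)$, so that $z''=vv'$. This turns \eqref{eq27} into $v\,(v'+v-e^{-z})=0$.
\begin{itemize}
\item If $v\equiv 0$, then $z$ is constant, and Theorem \ref{t52} (or a direct check) shows $\Sigma$ is a horizontal plane $z=z_0$.
\item Otherwise $v'+v=e^{-z}$ on an open set. This linear equation is solved with the integrating factor $e^{z}$: $(ve^{z})'=1$, so $z'=(z+C)e^{-z}$.
\end{itemize}
Separating variables gives $ds=\dfrac{e^{z}}{z+C}\,dz$. The substitution $u=z+C$ makes the integral $e^{-C}\int \dfrac{e^{u}}{u}\,du=e^{-C}\operatorname{Ei}(z+C)+A$, which is \eqref{eq24}. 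By uniqueness, the line $z=-C$ is itself the constant solution. So a non-constant solution stays on one side of it, and I would study the branch with $z>-C$.

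For the asymptotics:
\begin{itemize}
\item As $z\downarrow -C$, the integrand $e^{z}/(z+C)$ has a non-integrable logarithmic singularity, so $s\to-\infty$. Meanwhile $z'\to0$ and $y'=e^{z}\sqrt{1-z'^2}\to e^{-C}>0$, so $\gamma$ is asymptotic to the horizontal line $z=-C$.
\item As $z\to+\infty$, $\int^{z}e^{t}/t\,dt$ diverges, so $s\to+\infty$. Since $z'>0$ there, the height goes to $+\infty$ along the other branch.
\end{itemize}

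The main obstacle I expect is the arc-length constraint $|z'|\le 1$, i.e.\ $|(z+C)e^{-z}|\le 1$. On $z>-C$ this function attains its maximum $e^{C-1}$ at $z=1-C$. So for $C>1$ the curve would reach a vertical tangent ($\theta=\pi/2$) before $z\to\infty$. I would handle this first by checking whether the statement implicitly assumes $C\le 1$ or allows the branch to be continued past the vertical point. If neither, I would state the asymptotic description for the admissible range of $C$ and note the vertical-tangent endpoint otherwise, as in Theorem \ref{t55}. The side $z<-C$ also needs a remark: there $z$ decreases and $|z'|$ eventually exceeds $1$, so that branch ends with a vertical tangent and is not the one described in the statement.
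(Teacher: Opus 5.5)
Your proposal is correct and follows the paper's proof essentially step by step. You substitute $z'=\sin\theta$, $z''=\theta'\cos\theta$ into \eqref{122}, reduce the order with $v(z)=z'$, use the integrating factor to get $z'e^{z}=z+C$, separate variables into $\operatorname{Ei}$, invoke uniqueness to split off the horizontal planes, and impose $(z+C)e^{-z}\le 1$, i.e.\ $C\le 1$. Your additional remarks on $C>1$ and on the side $z<-C$ are accurate and more careful than the paper, which tacitly restricts the final asymptotic description to $C\le 1$, $z>-C$. Your open question about continuing past the vertical point can also be settled: at a point with $\cos\theta=0$, equation \eqref{122} forces $\theta=\pi/2$ and $z=0$, hence $C=1$, so in those other cases the curve genuinely terminates at the vertical tangent.
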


\begin{proof}

Using that $z'' = \theta'\cos\theta$, equation \eqref{eq27} is an immediate consequence of \eqref{122}. It is known by Table \ref{ex1} that the horizontal planes are intrinsic $F_2$-solitons.

We prove that if $\gamma$ has a point, say $s=0$ where $z'(0)=0$, then $\gamma$ is a horizontal line, hence $\Sigma$ is a horizontal plane. Without loss of generality, assume $\theta(0)=0$. In such a case, the triple of functions $(e^{z_0}+y_0,z_0,0)$ is a solution of the system with the case initial conditions, and by uniqueness, $\gamma(s) =(0,e^{z_0}+y_0,z_0)$. This proves that $\gamma$ is a horizontal line.

From the above argument, if $\Sigma$ is an $F_1$-invariant surface which is also an intrinsic $F_1$-soliton and if, in addition, $\Sigma$ is not a horizontal plane, then the coordinate $z(s)$ of its generating curve $\gamma$ satisfies $z'(s)\not=0$ for all $s$. Let $v(z) = z'(s)$. By the chain rule, $z'' = v \frac{dv}{dz}$, so equation \eqref{eq27} transforms into 
$$ v \frac{dv}{dz} + v^2 - v e^{-z} = 0. $$
Since $v\not=0$, dividing by $v$ yields the first-order linear ODE 
$$ \frac{dv}{dz} + v = e^{-z}. $$
Multiplying by the integrating factor $e^z$, the equation becomes $(v e^z)' = 1$. Integrating with respect to $z$, we find 
\begin{equation}\label{v2}
v(z) e^z = z + C,
\end{equation}
where $C \in \r$ is a constant. Thus, $z' = v(z) = (z+C)e^{-z}$. Separating variables and integrating yields 
\begin{equation}\label{z2}
\int ds = \int \frac{e^z}{z+C} dz.
\end{equation}
 The change of variable $u = z+C$ leads directly to 
$$ s = e^{-C} \int \frac{e^u}{u} du = e^{-C} \operatorname{Ei}(z + C) + A, $$
where $A \in \r$ is a constant of integration. This proves \eqref{eq24}.

We now examine the geometric properties of $\gamma$. For this, we use the properties of the function $\operatorname{Ei}(x)$ \cite{ab}. Since the integrand in the right-hand side of \eqref{z2} is not integrable as $z \to +\infty$, the arc length $s(z)$ diverges to $+\infty$. For large positive values of the argument, the asymptotic expansion of the exponential integral is $\operatorname{Ei}(x) \sim \frac{e^x}{x}$. Therefore, we have $z\to+\infty$ as $s(z)\to+\infty$. Furthermore, the derivative of $z$ is $z'= (z+C)e^{-z}$, which tends to $0$ as $z \to \infty$. This means that its slope becomes nearly horizontal at infinite height.

On the other hand, the condition $|z'|=|\sin\theta| \leq 1$ must hold. Equation \eqref{v2} implies that $z$ must satisfy $(z+C)e^{-z} \leq 1$ for all $z$ in the domain of the solution. Since the function $z\mapsto (z+C)e^{-z}$ attains its maximum value $e^{C-1}$ at $z = 1-C$, a global solution exists for all $z > -C$ if and only if $e^{C-1}\leq 1$, which leads to $C \leq 1$. Under this condition, the logarithmic divergence of the integral $\int \frac{e^z}{z+C} dz$ as $z \to -C^+$ proves that the generating curve is asymptotic to the horizontal line $z = -C$ as $s \to -\infty$. See Fig. \ref{fig1}, right. 
\end{proof}

 \begin{figure}
 \includegraphics[width=.35\textwidth]{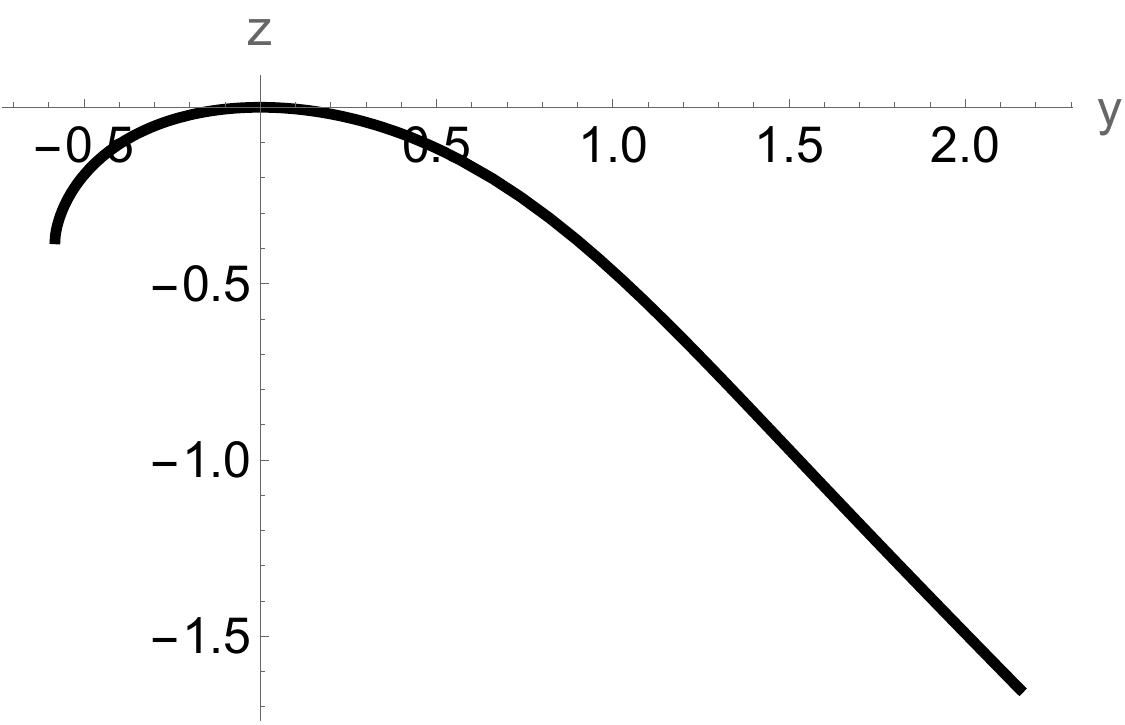}\quad \includegraphics[width=.55\textwidth]{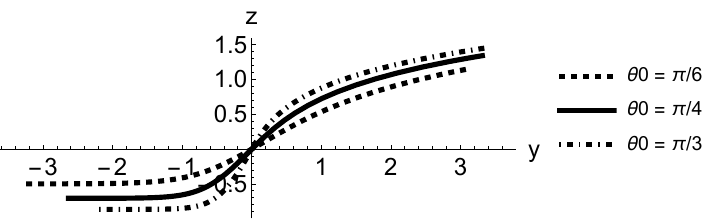} 
 \caption{$F_1$-invariant surfaces: the case of $F_2$-solitons: extrinsic curvature (left) and intrinsic curvature (right)}
 \label{fig1}
 \end{figure}

 \section{$F_1$-invariant surfaces: the case of $F_3$-solitons}\label{s6}

We first obtain the equations of extrinsic and intrinsic $F_3$-solitons in terms of the angle $\theta$ of the generating curve. 

\begin{proposition} Let $\Sigma$ be an $F_1$-invariant surface in $\sol$ generated by a curve parametrized by arc-length according to \eqref{1pp}. Suppose that $\Sigma$ is an $F_3$-soliton.
\begin{enumerate}
\item If $\Sigma$ is an extrinsic soliton, then 
 \begin{equation}\label{131}
 \theta' \cos\theta= y\sin\theta\, e^{-z} - \cos\theta - \cos^2\theta.
 \end{equation}
\item If $\Sigma$ is an intrinsic soliton, then 
 \begin{equation}\label{132}
 \theta'\cos\theta = y \sin\theta\, e^{-z} - \cos\theta-\sin^2\theta.
 \end{equation}
\end{enumerate}
\end{proposition}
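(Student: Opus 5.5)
This proposition follows directly from substituting the soliton equation into the curvature formulas, in the same way as Proposition \ref{pr61}. I will take $X=F_3$ in \eqref{eq2}, use the expression $\langle N,F_3\rangle = y\sin\theta\, e^{-z}-\cos\theta$ from \eqref{eq17}, and use the arc-length expressions of $K_{ext}$ and $K_{int}$ from Corollary \ref{cor23}. Throughout, the generating curve is assumed to satisfy \eqref{1pp}, and $N$ is given by \eqref{n1}.

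For the extrinsic case, I substitute $K_{ext}=-\theta'\cos\theta-\cos^2\theta$ into $K=-\langle N,F_3\rangle$. This gives
$$-\theta'\cos\theta-\cos^2\theta = -y\sin\theta\, e^{-z}+\cos\theta.$$
Multiplying by $-1$ and moving terms gives
$$\theta'\cos\theta = y\sin\theta\, e^{-z}-\cos\theta-\cos^2\theta,$$
which is \eqref{131}.

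For the intrinsic case, I substitute $K_{int}=-\theta'\cos\theta-\sin^2\theta$ in the same way. The identical rearrangement gives \eqref{132}.

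The only point needing care is to confirm that \eqref{eq17} was computed with the same unit normal \eqref{n1} used in Corollary \ref{cor23}; otherwise both sides could differ by a sign. I would check this directly. Write $F_3=-xe^zE_1+ye^{-z}E_2+E_3$, and rewrite \eqref{n1} in arc-length terms using $z'=\sin\theta$, $y'=e^z\cos\theta$ and $W=e^z$, so that $N=\sin\theta\,E_2-\cos\theta\,E_3$. Then
$$\langle N,F_3\rangle = y e^{-z}\sin\theta-\cos\theta,$$
which agrees with \eqref{eq17}. No other obstacle is expected.
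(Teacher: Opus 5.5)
Your proposal is correct and follows the paper's argument: the paper treats this proposition, like Proposition \ref{pr61}, as an immediate substitution of $K_{ext}=-\theta'\cos\theta-\cos^2\theta$ and $K_{int}=-\theta'\cos\theta-\sin^2\theta$ from Corollary \ref{cor23}, together with $\langle N,F_3\rangle=y\sin\theta\,e^{-z}-\cos\theta$ from \eqref{eq17}, into \eqref{eq2}, and your rearrangements are right. Your extra check that $N=\sin\theta\,E_2-\cos\theta\,E_3$ reproduces \eqref{eq17} is a sound sanity check, though only the right-hand side depends on the orientation, since $K$ is unchanged when $N$ is replaced by $-N$.
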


From now on, we will assume that the generating curves of $F_1$-invariant surfaces are parametrized according to \eqref{1pp}. 

In the following, we study the case where the function $\theta'$ is constant. First, we assume $\theta(s)=c$ (Theorem \ref{t62}) and then $\theta(s)=cs$ (Theorem \ref{t63}).

\begin{theorem}\label{t62}
Let $\Sigma$ be an $F_1$-invariant surface in $\sol$ generated by a curve $\gamma(s)$. Suppose that $\theta(s)=c$.
\begin{enumerate}
 \item If $\Sigma$ is an extrinsic $F_3$-soliton, then $\Sigma$ is either a horizontal plane $z = z_0$ or the vertical plane $y = 0$.
 \item The surface $\Sigma$ cannot be an intrinsic $F_3$-soliton.
\end{enumerate}
\end{theorem}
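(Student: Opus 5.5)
We substitute $\theta\equiv c$, $\theta'=0$ into \eqref{131} and \eqref{132}, using \eqref{1pp}: $z'=\sin c$ and $y'=e^{z}\cos c$. We then split into the cases $\sin c=0$, $\cos c=0$, and $\sin c\cos c\neq 0$. In each case the resulting identity is a relation between $y(s)$, $e^{-z(s)}$ and constants, and we play it against the explicit forms of $y$ and $z$.

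\emph{Extrinsic case.} Equation \eqref{131} becomes
\begin{equation*}
y\sin c\, e^{-z}=\cos c+\cos^2 c .
\end{equation*}
\begin{itemize}
\item If $\sin c=0$, the left side vanishes and $\cos c=\pm1$. The case $\cos c=1$ gives $2=0$, which is impossible. The case $\cos c=-1$ gives $0=0$. Then $z\equiv z_0$, and $\gamma$ is a horizontal line, so $\Sigma$ is the plane $z=z_0$ (consistent with Table \ref{ex1}).
\item If $\cos c=0$, then $y'=0$, so $y\equiv y_0$. The equation reduces to $y_0\sin c\,e^{-z}=0$ with $\sin c=\pm1$, hence $y_0=0$, and $\Sigma$ is the vertical plane $y=0$.
\item If $\sin c\cos c\ne 0$, we write $z=s\sin c+z_0$ and $y=\frac{\cos c}{\sin c}e^{z}+y_1$. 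Then
\begin{equation*}
y\sin c\,e^{-z}=\cos c+y_1\sin c\, e^{-z}.
\end{equation*}
Equating with the right side gives $y_1\sin c\,e^{-z(s)}=\cos^2 c$ for all $s$. The left side is nonconstant unless $y_1=0$ (because $z$ is nonconstant), and $y_1=0$ forces $\cos^2c=0$, a contradiction.
\end{itemize}

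\emph{Intrinsic case.} Equation \eqref{132} becomes
\begin{equation*}
y\sin c\,e^{-z}=\cos c+\sin^2 c .
\end{equation*}
\begin{itemize}
\item If $\sin c=0$, we get $\cos c=0$, which is impossible.
\item If $\cos c=0$, then $y\equiv y_0$ and $y_0\sin c\,e^{-z}=1$ with $z=\pm s+z_0$ nonconstant, which is impossible.
\item Otherwise, the same substitution gives $y_1\sin c\, e^{-z(s)}=\sin^2 c$. This forces $y_1=0$ and then $\sin c=0$, a contradiction.
\end{itemize}
So no intrinsic $F_3$-soliton arises.

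The only delicate point is the generic case $\sin c\cos c\neq0$. There we need the explicit integration $y=\cot c\, e^{z}+y_1$, which is valid because $dy/dz=\cot c\,e^{z}$. After that, the linear independence of $1$ and $e^{-z(s)}$ along a nonconstant $z$ does the rest. The remaining cases are direct bookkeeping of signs of $\cos c$.
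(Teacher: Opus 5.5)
Your proof is correct and follows essentially the same route as the paper: set $\theta'=0$ in \eqref{131} and \eqref{132}, split on whether $\sin c$ vanishes, and in the nondegenerate case compare the resulting relation for $ye^{-z}$ with the arc-length equations $y'=e^{z}\cos c$, $z'=\sin c$. The difference is minor. The paper notes that $ye^{-z}=C$ is constant and differentiates $y=Ce^{z}$ to get $\cos c=C\sin c$. You instead integrate to $y=\cot c\,e^{z}+y_1$ and use that $e^{-z}$ is nonconstant, which is why you need the separate (easy) case $\cos c=0$.
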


\begin{proof}
\begin{enumerate}
\item Suppose $\Sigma$ is an extrinsic $F_3$-soliton. If $\theta = c$, then \eqref{131} gives
$$\cos^2 c = y(s)\sin c \, e^{-z(s)} - \cos c.$$
We analyze two cases based on $\sin c$.
\begin{enumerate}
 \item If $\sin c = 0$, then $c = 0$ or $c = \pi$. The equation reduces to $\cos^2 c =- \cos c$. Since $\cos c \in \{1, -1\}$, the only valid solution is $\cos c = -1$, which implies $c = \pi$. Consequently, $z'(s) = \sin \pi = 0$, so $z(s) = z_0$ for some constant $z_0$. The condition $y'(s) = e^{z_0}\cos 0$ integrates to $y(s) =- s e^{z_0} + y_0$. This generates a horizontal plane $z = z_0$.
 \item If $\sin c \neq 0$, we can isolate $y(s)$:
 $$y(s)e^{-z(s)} = \frac{\cos c - \cos^2 c}{\sin c} \equiv C,$$
 where $C$ is a constant. Thus, $y(s) = C\, e^{z(s)}$. Differentiating with respect to $s$ gives $y'(s) = C z'(s) e^{z(s)} = C \sin c \, e^{z(s)}$. However, from the arc-length parametrization, we know $y'(s) = \cos c \, e^{z(s)}$. Equating both expressions yields $\cos c = C \sin c$. Substituting the value of $C$ into this relation gives, after simplification, $ \cos^2 c = 0$. 
 Therefore, $c = \pm \pi/2$ and $\sin c = \pm 1$. Since $\cos c = 0$, we have $C = 0$, which implies $y(s) = 0$. This generates the vertical plane $y = 0$.
\end{enumerate}
\item Suppose $\Sigma$ is an intrinsic $F_3$-soliton. If $\theta = c$, then \eqref{132} becomes
$$\sin^2 c = y(s)\sin c \, e^{-z(s)} - \cos c.$$
Again, we analyze two cases:
\begin{enumerate}
 \item If $\sin c = 0$, then $\cos c = \pm 1$. The equation becomes $0 = \pm 1$, which is a contradiction.
 \item If $\sin c \neq 0$, then
 $$y(s)e^{-z(s)} = \frac{\cos c (1+\sin^2 c)}{\sin c} \equiv C,$$
 for some constant $C$. Thus, $y(s) = C\, e^{z(s)}$. Differentiating yields $y'(s) = C \sin c \, e^{z(s)}$. Equating this to $y'(s) = \cos c \, e^{z(s)}$ gives $\cos c = C \sin c$. Substituting $C$ back into this relation yields $ \sin^2 c = 0$, 
 which contradicts our assumption that $\sin c \neq 0$. Thus, no such surface exists.
\end{enumerate}
\end{enumerate}
\end{proof}

\begin{theorem}\label{t63}
 Let $\Sigma$ be an $F_1$-invariant surface in $\sol$ generated by a curve $\gamma$. Suppose the angle function satisfies $\theta'(s) = c$ for some non-zero constant $c \in \r$. Then $\Sigma$ can be neither an extrinsic nor an intrinsic $F_3$-soliton.
\end{theorem}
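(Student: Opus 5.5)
The plan is to follow the strategy of the analogous result for $F_2$-solitons, but first eliminate the extra factor $y$ in \eqref{131} and \eqref{132} by working with the auxiliary function $q(s)=y(s)e^{-z(s)}$. By \eqref{1pp},
$$q'=y'e^{-z}-yz'e^{-z}=\cos\theta-q\sin\theta .$$
Write $\theta(s)=cs+\theta_0$ with $c\neq0$. Then both soliton equations take the form
$$q\sin\theta=R(\theta),$$
where
$$R(\theta)=(c+1)\cos\theta+\cos^2\theta \ \text{(extrinsic)}, \qquad R(\theta)=(c+1)\cos\theta+\sin^2\theta \ \text{(intrinsic)}.$$
The key is that $R$ depends on $s$ only through $\theta$.

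Next, differentiate $q\sin\theta=R(\theta)$ with respect to $s$, using $\theta'=c$:
$$q'\sin\theta+c\,q\cos\theta=c\,R'(\theta).$$
Substitute $q'=\cos\theta-q\sin\theta$, multiply the result by $\sin\theta$, and replace $q\sin\theta$ by $R$ everywhere. This eliminates $q$, and with it $y$ and $z$, and leaves
$$\Phi_c(\theta):=\cos\theta\sin^2\theta-R\sin^2\theta+c\,R\cos\theta-c\,R'(\theta)\sin\theta=0 .$$
This must hold for every $s$ in the interval of definition. Since $c\neq0$, $\theta$ sweeps a nontrivial interval, and $\Phi_c$ is a trigonometric polynomial, hence real-analytic. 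Therefore $\Phi_c\equiv0$ for all $\theta\in\r$. No division by $\sin\theta$ is needed, so points with $\sin\theta=0$ cause no trouble.

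It remains to show $\Phi_c\not\equiv0$ for every $c\neq0$ by evaluating at a few special angles.
\begin{itemize}
\item \emph{Extrinsic case.} At $\theta=0$ we have $R(0)=c+2$, so $\Phi_c(0)=c(c+2)$, which forces $c=-2$. At $\theta=\pi$ we have $R(\pi)=-c$, so $\Phi_c(\pi)=c^2\neq0$. This is a contradiction.
\item \emph{Intrinsic case.} At $\theta=0$ we have $\Phi_c(0)=c(c+1)$, which forces $c=-1$. At $\theta=\pi/2$ we have $R=1$ and $R'(\pi/2)=-(c+1)$, so $\Phi_c(\pi/2)=-1+c(c+1)=c^2+c-1$. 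This equals $-1\neq0$ when $c=-1$, again a contradiction.
\end{itemize}
Hence neither type of $F_3$-soliton can have $\theta'$ equal to a nonzero constant.

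The step most likely to be delicate is the derivative bookkeeping: getting $R'(\theta)$ and the signs right. I would double-check the final identity by expanding $\Phi_c$ fully, as was done for the $F_2$ case. The evaluation at special angles is a safeguard, since only finitely many values need to be checked.
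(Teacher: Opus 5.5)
Your proposal is correct and is essentially the paper's proof: substituting $\theta'=c$ into \eqref{131} and \eqref{132}, differentiating, and eliminating $ye^{-z}$ gives exactly the paper's trigonometric identities (indeed $8\Phi_c$ equals the paper's expressions in both cases). Your analyticity argument, together with the evaluation at special angles, supplies the justification the paper leaves implicit in ``This yields a contradiction''; in the extrinsic case the single value $\Phi_c(\pi)=c^2\neq0$ already suffices.
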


\begin{proof}
Assume $\theta'(s) = c \neq 0$, which implies that $\theta(s) = cs + \theta_0$ is non-constant. 
 
\begin{enumerate}
\item Suppose $\Sigma$ is an extrinsic $F_3$-soliton. 
Substituting $\theta' = c$ in \eqref{131} yields 
$$y e^{-z} \sin\theta = (c+1)\cos\theta + \cos^2\theta.$$
Differentiating this equation with respect to $s$ and simplifying gives
$$\cos\theta \sin^2\theta + \left( (c+1)\cos\theta + \cos^2\theta \right)(c\cos\theta - \sin^2\theta) = -c(c+1)\sin^2\theta - 2c\cos\theta\sin^2\theta,$$
or equivalently, $ 8 c^2+8 c \cos \theta +8 c+\cos (4 \theta )-1 = 0$. This yields a contradiction.

\item Suppose $\Sigma$ is an intrinsic $F_3$-soliton. 
Substituting $\theta' = c$ in \eqref{132} yields
$$y e^{-z} \sin\theta = (c+1)\cos\theta + \sin^2\theta.$$
Differentiating this with respect to $s$ gives 
$$-\cos^4\theta + 2c\cos^3\theta + 2\cos^2\theta - 2c\cos\theta + (c^2+c-1) = 0.$$
This can be written as 
$$ 8 c^2-4 c \cos \theta +4 c \cos (3 \theta )+8 c+4 \cos (2 \theta )-\cos (4 \theta )-3 =0,$$
obtaining a contradiction.
\end{enumerate}
\end{proof}

In Table \ref{table2}, we show all cases of $F_1$-invariant solitons where the angle function $\theta$ is linear. 

\begin{table}[h!]
\centering
\caption{Existence of $F_1$-invariant solitons with a linear function angle $\theta(s)$.}\label{table2}
\begin{tabular}{lll}
\hline
Soliton type & Case $\theta' = 0$ & Case $\theta' = c \neq 0$ \\
\hline
Extrinsic $F_2$ & Does not exist & Does not exist \\
Intrinsic $F_2$ & Only horizontal planes & Does not exist \\
Extrinsic $F_3$ & Only horizontal and vertical planes & Does not exist \\
Intrinsic $F_3$ & Does not exist & Does not exist \\
\hline
\end{tabular}
\end{table}

We now consider the general case for the function $\theta(s)$. A key difference from the $F_2$-soliton case 
is that the function $y$ appears explicitly in \eqref{131} and \eqref{132}, in contrast to 
Eqs. \eqref{121} and \eqref{122}. In the following result, we consider extrinsic solitons.

\begin{theorem}\label{t64}
Let $\gamma$ be the generating curve of an $F_1$-invariant extrinsic $F_3$-soliton. 
\begin{enumerate}
 \item Horizontal planes $z = z_0$ are extrinsic $F_3$-solitons, which correspond to $\theta(s)=\pi$ in \eqref{131}.
 \item The plane $y=0$ is the unique solution generated by a vertical line. Any other generating curve is a graph over the $y$-axis. 
 
 \end{enumerate}
\end{theorem}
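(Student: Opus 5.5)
The proof works directly from the extrinsic $F_3$-soliton equation \eqref{131}, together with the arc-length relations \eqref{1pp}: $y'=e^{z}\cos\theta$ and $z'=\sin\theta$.

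For item (1), substitute $\theta\equiv\pi$ into \eqref{131}. Then $\theta'=0$, $\sin\theta=0$ and $\cos\theta=-1$. The left-hand side vanishes, and the right-hand side equals $0-(-1)-1=0$, so the equation holds identically. By \eqref{1pp}, $z'=0$ and $y'=-e^{z_0}$, so the curve is a horizontal line and the surface is the plane $z=z_0$. This matches Theorem \ref{t62}(1)(a) and Table \ref{ex1}. A direct check also works: $K_{ext}=-\theta'\cos\theta-\cos^2\theta=-1$ and $-\langle N,F_3\rangle=-(y\sin\theta e^{-z}-\cos\theta)=-1$.

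For item (2), suppose that $\gamma'$ is vertical at some $s_0$, that is, $\cos\theta(s_0)=0$ and $\sin\theta(s_0)=\pm1$. Evaluating \eqref{131} at $s_0$ kills the left-hand side and the terms $-\cos\theta-\cos^2\theta$. What remains is $0=\pm y(s_0)e^{-z(s_0)}$, so $y(s_0)=0$.

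Next compare with the explicit solution. The vertical line $y\equiv0$, $\theta\equiv\pm\pi/2$, $z=\pm s+z_0$ satisfies \eqref{131} (Theorem \ref{t62}(1)(b)). To conclude that $\gamma$ coincides with it, rewrite \eqref{131} as a first-order system in $(y,z,\theta)$:
\[
y'=e^{z}\cos\theta,\qquad z'=\sin\theta,
\]
together with the $\theta$-equation. This needs care, because \eqref{131} determines $\theta'$ only after dividing by $\cos\theta$, which is singular exactly at vertical points.

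Rather than invoking uniqueness naively, I would argue by differentiating \eqref{131}. Set $G(s)=y\sin\theta e^{-z}-\cos\theta-\cos^2\theta$, so that \eqref{131} reads $\theta'\cos\theta=G$. Suppose the vertical point is isolated, with $\cos\theta$ changing sign across $s_0$ (or at least vanishing there). Then $G(s_0)=0$ and $y(s_0)=0$. Compute
\[
G'(s_0)=\pm y'(s_0)e^{-z(s_0)}+\sin\theta(s_0)\,\theta'(s_0)+\dots
\]
and note that $y'(s_0)=e^{z}\cos\theta(s_0)=0$. Differentiating $\theta'\cos\theta=G$ at $s_0$ gives $-\theta'^2\sin\theta=G'(s_0)$, which should yield a relation forcing $\theta'(s_0)=0$.

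The goal is then to show inductively that all derivatives of $\cos\theta$ and of $y$ vanish at $s_0$. This is clean in the analytic category, since the system is analytic, and then $\gamma$ agrees with the vertical line. Alternatively, view the curve as a graph $y=y(z)$ near $s_0$, with $z$ as the parameter since $z'=\pm1\neq0$. Then \eqref{131} becomes a regular second-order ODE for $y(z)$, and $y\equiv0$ solves it with $y(z_0)=0$.

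The key question is whether the data $y(z_0)=0$, $y_z(z_0)=0$ determine the solution. In this graph form the ODE should be regular, so standard uniqueness gives $y\equiv0$ and $\Sigma$ is the plane $y=0$. This reparametrization is the main obstacle. I would verify that the equation for $y(z)$ is non-degenerate at $y_z=0$. I expect the form $y_{zz}=\Phi(y,y_z,z)$, with $\Phi$ smooth, obtained after multiplying through by the appropriate power of $(1+e^{-2z}y_z^2)$.

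Finally, if no vertical point exists, then $\cos\theta\neq0$ everywhere. Hence $y'$ has constant sign and $\gamma$ is a graph over the $y$-axis.
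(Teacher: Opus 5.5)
You are right to distrust uniqueness at a vertical point. This is exactly the step the paper's proof takes for granted: it notes that $(0,\pm s+z_0,\pm\pi/2)$ solves \eqref{1pp}--\eqref{131} with the same data and invokes uniqueness, although \eqref{131} only gives $\theta'=G/\cos\theta$, which is $0/0$ there. But neither of your remedies works.

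In the graph parametrization $y=y(z)$ the soliton equation reads
\[
y_z\,e^{2z}\bigl(2y_z+y_z^3e^{-2z}-y_{zz}\bigr)=(y-y_z)\bigl(y_z^2+e^{2z}\bigr)^{3/2},
\]
so the coefficient of $y_{zz}$ vanishes precisely at $y_z=0$, and no smooth $\Phi$ exists. No reparametrization can help, because $K_{ext}=-\cos\theta\,(\theta'+\cos\theta)$ is a product. Its first factor, the normal curvature in the direction $\partial_x$, kills the second-order term at vertical points. Likewise, differentiate $\theta'\cos\theta=G$ at $s_0$, using $u(s_0)=u'(s_0)=0$ with $u=ye^{-z}$. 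This gives $-\theta'^2\sin\theta=\theta'\sin\theta$, i.e.\ $\theta'(s_0)\bigl(\theta'(s_0)+1\bigr)=0$, which does not force $\theta'(s_0)=0$.

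The root $\theta'(s_0)=-1$ is realized, so the gap cannot be closed. In \eqref{fase3}, $P_1=(0,\pi/2)$ has eigenvalue $1$ with eigenvector $(0,1)$. Its strong unstable manifold is a smooth (in fact analytic) graph $u=h(\theta)$ with $h(\pi/2)=h'(\pi/2)=0$ and $h''(\pi/2)=1$. Along it, $\theta'=(h\sin\theta-\cos\theta-\cos^2\theta)/\cos\theta$ extends analytically to $\theta=\pi/2$ with value $-1$. This is a regular ODE for $\theta(s)$, and its solution crosses $\pi/2$ at finite arc length. Set $u=h(\theta(s))$, $z'=\sin\theta$ and $y=ue^{z}$; invariance of the graph gives $u'=\cos\theta-u\sin\theta$. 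The result is a smooth solution of \eqref{1pp}--\eqref{131} with $\theta(s_0)=\pi/2$, $y(s_0)=y'(s_0)=0$ and $y''(s_0)=e^{z_0}>0$, i.e.\ $y\approx\tfrac12e^{z_0}(z-z_0)^2$.

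This generating curve has a vertical tangent, is not the line $y=0$, and is not a graph over the $y$-axis. So the second sentence of item (2) is false as stated. What your argument, and the paper's, legitimately establishes is the following: item (1); that $y=0$ is the only vertical line solution; and that at any vertical point of a solution one has $y=0$ and $\theta'\in\{0,-1\}$.
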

 
\begin{proof}
\begin{enumerate}
\item This follows directly from Theorem \ref{t62}. Note that, viewing a horizontal line as a solution of \eqref{1pp}-\eqref{131}, then $\theta(s)=\pi$.
\item We know that the plane $y=0$ is an extrinsic $F_3$-soliton by Theorem \ref{t62}. Suppose that $\gamma$ is a generating curve such that at $s=0$, $\gamma'(0)$ is a vertical vector. Then the triple of functions $(y(s),z(s),\theta(s))$ is a solution of \eqref{1pp}-\eqref{131} with initial conditions $y(0)=y_0$, $z(0)=z_0$ and $\theta(0)=\pi/2$ or $\theta(0)=-\pi/2$. Then \eqref{131} at $s=0$ implies $y_0e^{-z_0}=0$, hence $y_0=0$. Now, it is immediate that the triple of functions $(0,\pm s+z_0,\pm\frac{\pi}{2})$ is a solution of \eqref{1pp}-\eqref{131} with the same initial conditions. By uniqueness, $\gamma(s)=(0,0,\pm s+z_0)$, proving that $\gamma$ is the vertical line $y=0$. 

As a consequence, any other solution must be a graph over the $y$-axis, proving the result. 
\end{enumerate}
\end{proof}

The variety of generating curves depends on the initial conditions for the system \eqref{1pp}-\eqref{131}. This can be, in part, simplified by defining the following dynamical system and studying its phase portrait. Let $u(s) = y(s)e^{-z(s)}$. 
By differentiating $u$, we find $u' = y' e^{-z} - y z' e^{-z} = \cos\theta - u\sin\theta$. Similarly, \eqref{131} can be written in terms of $u$ as $-\theta'\cos\theta = -u \sin\theta + \cos\theta + \cos^2\theta$. We have constructed the dynamical system
\begin{equation}\label{fase2}
\begin{cases}
u' = \cos\theta - u\sin\theta \\
\theta' = u\tan\theta - 1 - \cos\theta, \quad \theta \neq \pm \pi/2.
\end{cases}
\end{equation} 
To analyze the behavior at the singular lines $\theta = \pm \pi/2$, we introduce the time reparametrization $d\tau = \frac{ds}{\cos\theta}$ to desingularize the vector field, obtaining the smooth system:
\begin{equation}\label{fase3}
\begin{cases}
\frac{du}{d\tau} = \cos^2\theta - u\sin\theta\cos\theta \\
\frac{d\theta}{d\tau} = u\sin\theta - \cos\theta - \cos^2\theta.
\end{cases}
\end{equation}
 Setting the derivatives to zero yields the equilibria $P_1=(0, \pi/2)$ and $P_2=(0, -\pi/2)$, as well as their $2\pi$-multiplies in the second coordinate. These points correspond strictly to the vertical plane $y=0$ (Theorem \ref{t62}). In Fig. \ref{fig2}, we depict the phase plane, indicating the vertical line $u=0$, and the horizontal lines $\theta=\pm\pi/2$ and $\theta=\pm\pi$.

The Jacobian matrix of \eqref{fase3} is
$$J(u, \theta) = \begin{pmatrix} -\sin\theta\cos\theta & -2\sin\theta\cos\theta - u(\cos^2\theta-\sin^2\theta) \\ \sin\theta & u\cos\theta + \sin\theta + 2\cos\theta\sin\theta \end{pmatrix}.$$
Evaluating at $P_1$ and $P_2$, we obtain:
$$J(P_1) = \begin{pmatrix} 0 & 0 \\ 1 & 1 \end{pmatrix},\quad J(P_2) = \begin{pmatrix} 0 & 0 \\ -1 & -1 \end{pmatrix}.$$
The eigenvalues are $\{1, 0\}$ for $P_1$ and $\{-1, 0\}$ for $P_2$. In particular, $P_1$ has an unstable manifold for the eigenvalue $1$ and $P_2$ a stable manifold for the eigenvalue $-1$. 

Recall that $\cos\theta\not=0$, and thus, although the trajectories cross the horizontal line $\theta=\pm\pi/2$, the generating curve cannot be extended further. For example, if $u=0$, which is equivalent to considering $y_0=0$, and if $\theta(0)\in(-\frac{\pi}{2},\frac{\pi}{2})$, the trajectory in the phase plane crosses the lines $\theta=\pm\pi/2$ for some values of $u$. This implies that the generating curve is defined in some bounded interval of the $y$-line (because $u$ is finite) and, at the endpoints, the curve $\gamma$ has vertical tangent vectors. See Fig. \ref{fig2}, left.

 \begin{figure}
 \includegraphics[width=.5\textwidth]{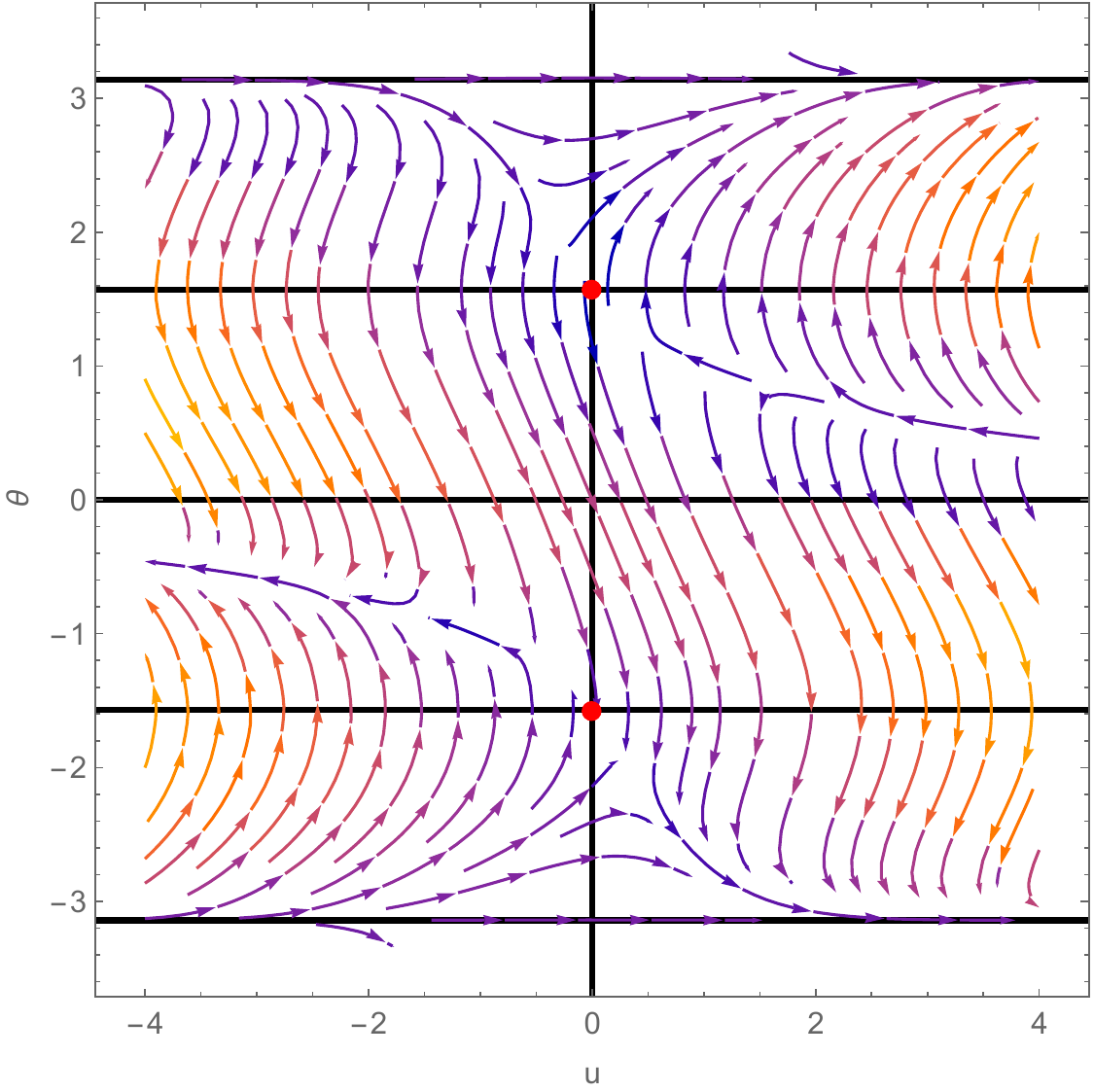} 
 \caption{The phase portrait of \eqref{fase3}.}
 \label{fig2}
 \end{figure}
 
However, there are other trajectories that cross $\theta=\pi/2$ but not $\theta=-\pi/2$, with the other branch of the trajectory being asymptotic to the line $\theta=\pi$. This implies that the solution ends vertically at a finite parameter $s$ for one branch, but the other branch goes to $y\to\infty$. Also, other trajectories remain in the strip $\pi/2<\theta<\pi$, which implies that $\gamma$ is a graph over the entire $y$-axis, being asymptotic to horizontal line at infinity. See Fig. \ref{fig3}. 

 \begin{figure}
 \includegraphics[width=.3\textwidth]{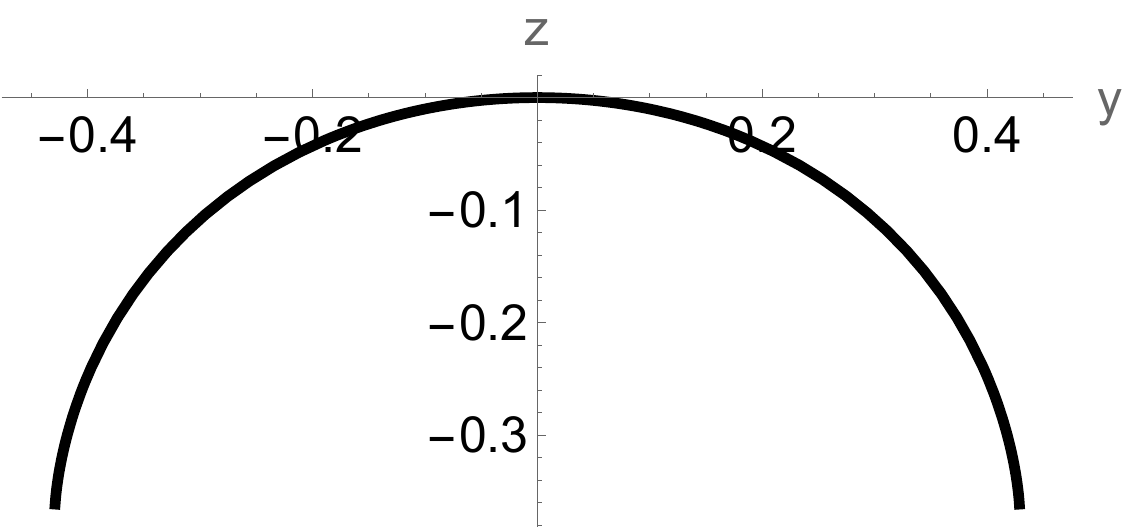} \quad \includegraphics[width=.25\textwidth]{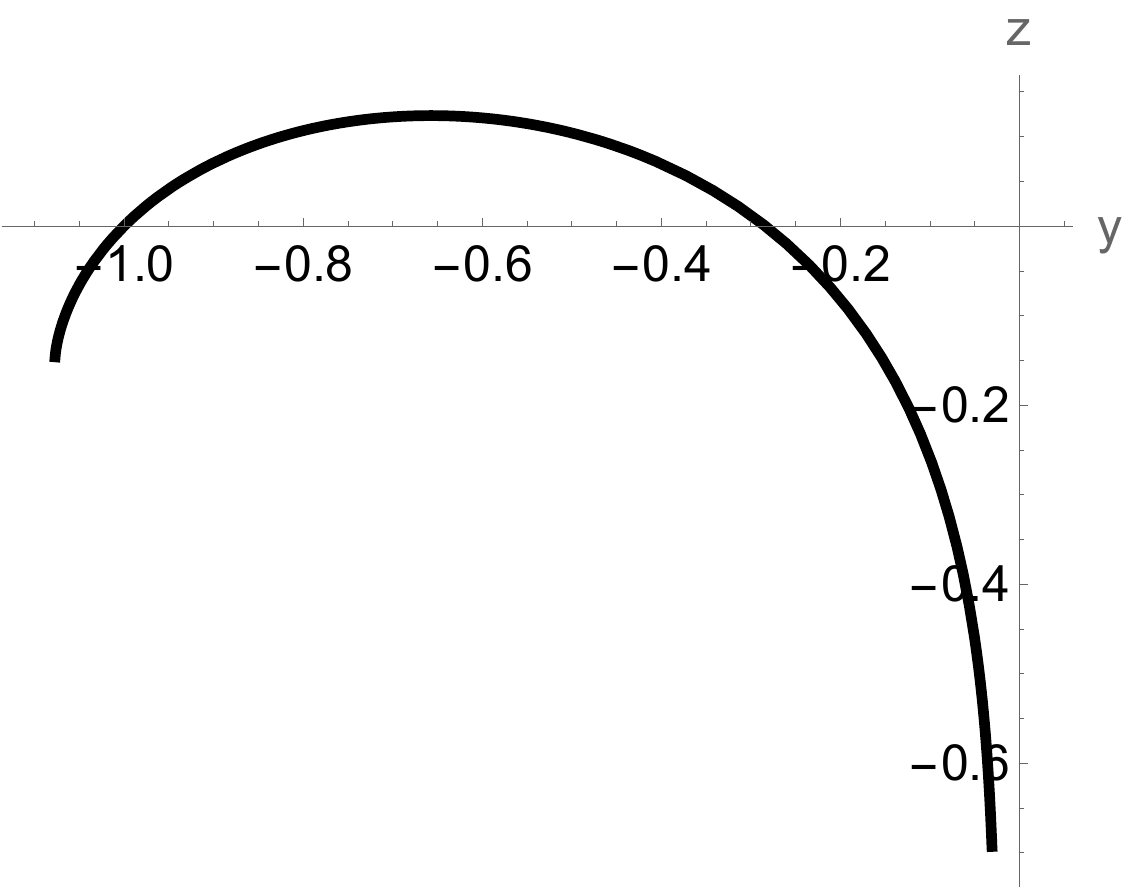}\quad \includegraphics[width=.3\textwidth]{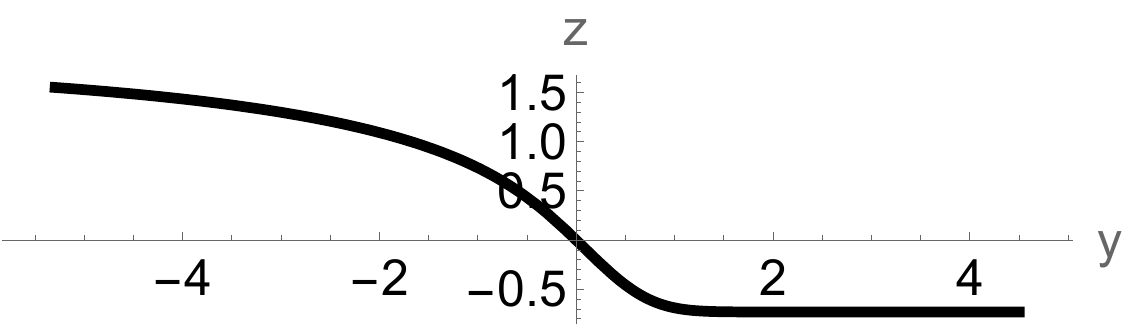}
 \caption{Generating curves of $F_1$-invariant extrinsic $F_3$-solitons.}
 \label{fig3}
 \end{figure}

 We now consider intrinsic $F_3$-solitons. 
 
 \begin{theorem}\label{t65}
Let $\Sigma$ be an $F_1$-invariant surface in $\sol$. Then $\Sigma$ is an intrinsic $F_3$-soliton if and only if the angle $\theta(s)$ and the scaled horizontal coordinate $u(s) = y(s)e^{-z(s)}$, satisfy the following autonomous planar system:
\begin{equation} \label{fase4}
\begin{cases}
u' = \cos\theta - u\sin\theta \\
\theta' = u\tan\theta - 1 - \sin\theta\tan\theta, \quad \text{with } \theta \neq \pm \pi/2.
\end{cases}
\end{equation}
Furthermore: 
\begin{enumerate}
 \item The surface $\Sigma$ cannot be a horizontal plane. 
 
 \item There are generating curves that reach a vertical tangency in finite arc length $s$. At these points, the solution cannot be extended.
\item There exist solutions that are entire graphs over the $y$-axis.
\end{enumerate}

\end{theorem}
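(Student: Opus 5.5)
The plan is to reduce the intrinsic soliton equation \eqref{132} to \eqref{fase4} in the same way \eqref{fase2} was derived from \eqref{131}, and then to read off the three properties from this system and its desingularization. With $u=ye^{-z}$ and \eqref{1pp}, differentiation gives $u'=y'e^{-z}-yz'e^{-z}=\cos\theta-u\sin\theta$. Substituting $ye^{-z}=u$ into \eqref{132} gives $\theta'\cos\theta=u\sin\theta-\cos\theta-\sin^2\theta$. Wherever $\cos\theta\neq0$ we divide by $\cos\theta$ and obtain the second equation of \eqref{fase4}. Conversely, a solution $(u,\theta)$ of \eqref{fase4} with given $z(0)$ defines $z$ by $z'=\sin\theta$ and then $y=ue^{z}$. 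A direct check gives $y'e^{-z}=u'+u\sin\theta=\cos\theta$, so \eqref{1pp} holds, and multiplying back by $\cos\theta$ gives \eqref{132}. This proves the equivalence.

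For item (1), a horizontal plane corresponds to $\sin\theta\equiv0$. Then \eqref{132} reduces to $0=-\cos\theta=\mp1$, which is a contradiction. This is also the case $\sin c=0$ in Theorem \ref{t62}.

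For items (2) and (3), I introduce the time change $d\tau=ds/\cos\theta$, as in \eqref{fase3}. This gives the smooth system
\[
\frac{du}{d\tau}=\cos^2\theta-u\sin\theta\cos\theta,\qquad \frac{d\theta}{d\tau}=u\sin\theta-\cos\theta-\sin^2\theta .
\]
On the line $\theta=\pi/2$ we get $du/d\tau=0$ and $d\theta/d\tau=u-1$, so every point with $u\neq1$ is crossed transversally.

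For item (2), I start at $\theta(0)=0$ with $u(0)=u_0$ large. Then $\theta'=u\tan\theta-1-\sin\theta\tan\theta$ behaves like $(u-\sin\theta)\tan\theta-1$. Since $u'=\cos\theta-u\sin\theta$, $u$ changes at bounded rate while $\theta$ stays in a compact subset of $(-\pi/2,\pi/2)$. First $\theta$ becomes negative. As long as $u>1$ and $\theta<0$, we have $\theta'\le -1$. A comparison argument in the $s$-variable then shows that $\theta$ reaches $-\pi/2$ at a finite $s_1$, with $u$ still bounded away from $1$; I must check that $u$ cannot drop to $1$ first, which I will control by choosing $u_0$ large. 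At $s_1$ we have $\cos\theta\to0$ and $\theta'$ finite in the $\tau$-picture, so $\gamma$ has a vertical tangent at finite arc length. Moreover $\theta'\to\infty$ in $s$ because of $\tan\theta$, so the curvature $|\theta'+\cos\theta|$ blows up and the curve cannot be extended.

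For item (3), I look for trajectories trapped in a strip where $\cos\theta$ stays away from zero. The strip $\pi/2<\theta<\pi$ is natural, by analogy with the extrinsic case, and because $\theta=\pi$ gives $\theta'=-u\cdot0-1+0\cdot$, i.e. $d\theta/d\tau=-\cos\pi=1$ there. The boundary $\theta=\pi$ is transverse, with $\theta$ decreasing in $s$ since $\cos\theta<0$. I would build an invariant region, for instance a region bounded by $\theta=\pi$ and a curve $u=\phi(\theta)$ near $\theta=\pi/2$ where the flow points inward, and then apply a Wa\.zewski/shooting argument to get a trajectory defined for all $s\in\mathbb R$ with $\cos\theta\le-\delta<0$. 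Then $y'=e^{z}\cos\theta$ has constant sign. Since $|z'|\le1$, a bound on the variation of $z$ from the trapping gives $\int|y'|\,ds=\infty$ in both directions, so $\gamma$ is an entire graph over the $y$-axis.

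The main obstacle is constructing this trapping region for item (3), that is, finding a curve near $\theta=\pi/2$ that trajectories cannot cross. I would first try the nullcline $u=1+\cos\theta\cot\theta\cdot$-type curve where $\theta'=0$, namely $u=\sin\theta+\cot\theta$, and examine the sign of the flow across it. If that fails, I would fall back on a numerical phase portrait together with a shooting argument between the two exit behaviours.
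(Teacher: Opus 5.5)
\textbf{Verdict: genuine gap in item (3); the equivalence and items (1) and (2) are fine.}

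Your derivation of \eqref{fase4}, including the converse, is correct, and so are items (1) and (2). Item (2) is in fact more careful than the paper, which only asserts the crossing. Your worry there about $u$ is unnecessary. For $\theta\in(-\pi/2,0)$ and $u\ge 0$ one has $u'=\cos\theta-u\sin\theta>0$ and $\theta'=(u-\sin\theta)\tan\theta-1\le -1$. So any $u_0\ge 0$ works, including the paper's $y(0)=0$. The solution reaches $\theta=-\pi/2$ before $s=\pi/2$, with $\theta'\to-\infty$ (not $+\infty$).

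\textbf{The gap.} Item (3) is not proved, and the region you aim at cannot contain the orbit you need.

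\emph{The strip $\pi/2<\theta<\pi$ cannot hold a complete orbit.} On $\theta=\pi$ the system gives $\theta'=-1$, so orbits cross $\theta=\pi$ only from $(\pi,3\pi/2)$ into $(\pi/2,\pi)$. In fact no solution stays in $\pi/2<\theta<\pi$ for all $s\in\mathbb R$. This follows from the identity in the next paragraph: backward in $s$ the quantity $w$ increases, and whether $w\to+\infty$ or $\int\sin^2\theta\,ds<\infty$, $\sin\theta$ is driven down to $0$ and past it.

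\emph{The nullcline only gives half-curves.} On $u=\sin\theta+\cot\theta$ one finds $u'=-\sin^2\theta$. So the part of $(\pi/2,\pi)$ to the left of the nullcline is forward invariant, but it yields only forward half-curves.

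\emph{What is actually needed.} You need orbits in the full strip $\pi/2<\theta<3\pi/2$, the one the paper uses. Such an orbit crosses $\theta=\pi$ once, at the lowest point of $\gamma$. This requires controlling both halves and gluing them at $\theta=\pi$. Your proposal offers no mechanism for this beyond numerics; the paper, too, reads existence off Fig.~\ref{fig4}.

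\textbf{The missing idea: a monotone quantity.} Write \eqref{132} as $(\sin\theta)'=u\sin\theta-\cos\theta-\sin^2\theta$ and add $u'=\cos\theta-u\sin\theta$. This gives
$$(u+\sin\theta)'=-\sin^2\theta\le 0.$$
With $w=u+\sin\theta$ one has $(\sin\theta)'=w\sin\theta-2\sin^2\theta-\cos\theta$. This equals $w-2$ at $\theta=\pi/2$ and $-w-2$ at $\theta=3\pi/2$.

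Take $\theta(0)=\pi$ and $u(0)=u_0\in(-2,2)$.
\begin{itemize}
\item For $s>0$: $\sin\theta>0$ and $w\le u_0$, so $(\sin\theta)'\le u_0\sin\theta-2\sin^2\theta+\sqrt{1-\sin^2\theta}$. The right side is negative near $\sin\theta=1$, hence $\sin\theta\le\sigma_1<1$.
\item For $s<0$: $\sin\theta<0$ and $w\ge u_0$, so the same bound holds, and it is now negative near $\sin\theta=-1$. Going backward in $s$, $\sin\theta$ is pushed away from $-1$ and stays $\ge\sigma_2>-1$.
\end{itemize}

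Hence the solution exists for all $s\in\mathbb R$ with $\cos\theta\le-\delta<0$. Moreover $z'=\sin\theta$ changes sign only at $s=0$, so $z\ge z(0)$ everywhere. Then $|y'|=e^{z}|\cos\theta|\ge\delta e^{z(0)}$, so $y$ decreases from $+\infty$ to $-\infty$. This gives an open family of entire graphs, and a proof of (3) that is rigorous where the paper's argument relies on the phase portrait.
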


\begin{proof}
The derivation of \eqref{fase4} is analogous to that of the previous theorem. We consider \eqref{fase4}. Since it is singular at $\theta=\pm\pi/2$, we apply the reparametrization $d\tau = \frac{ds}{\cos\theta}$ to obtain a smooth, globally defined vector field
\begin{equation}\label{fase5}
\begin{cases}
\frac{du}{d\tau} = \cos^2\theta - u\sin\theta\cos\theta \\
\frac{d\theta}{d\tau} = u\sin\theta - \cos\theta - \sin^2\theta
\end{cases}
\end{equation}
To find the equilibria, we set the derivatives to zero, finding that the equilibrium points are $Q_1=(1, \pi/2)$ and $Q_2=(-1, -\pi/2)$. See Fig. \ref{fig4}. 

 \begin{figure}
 \includegraphics[width=.5\textwidth]{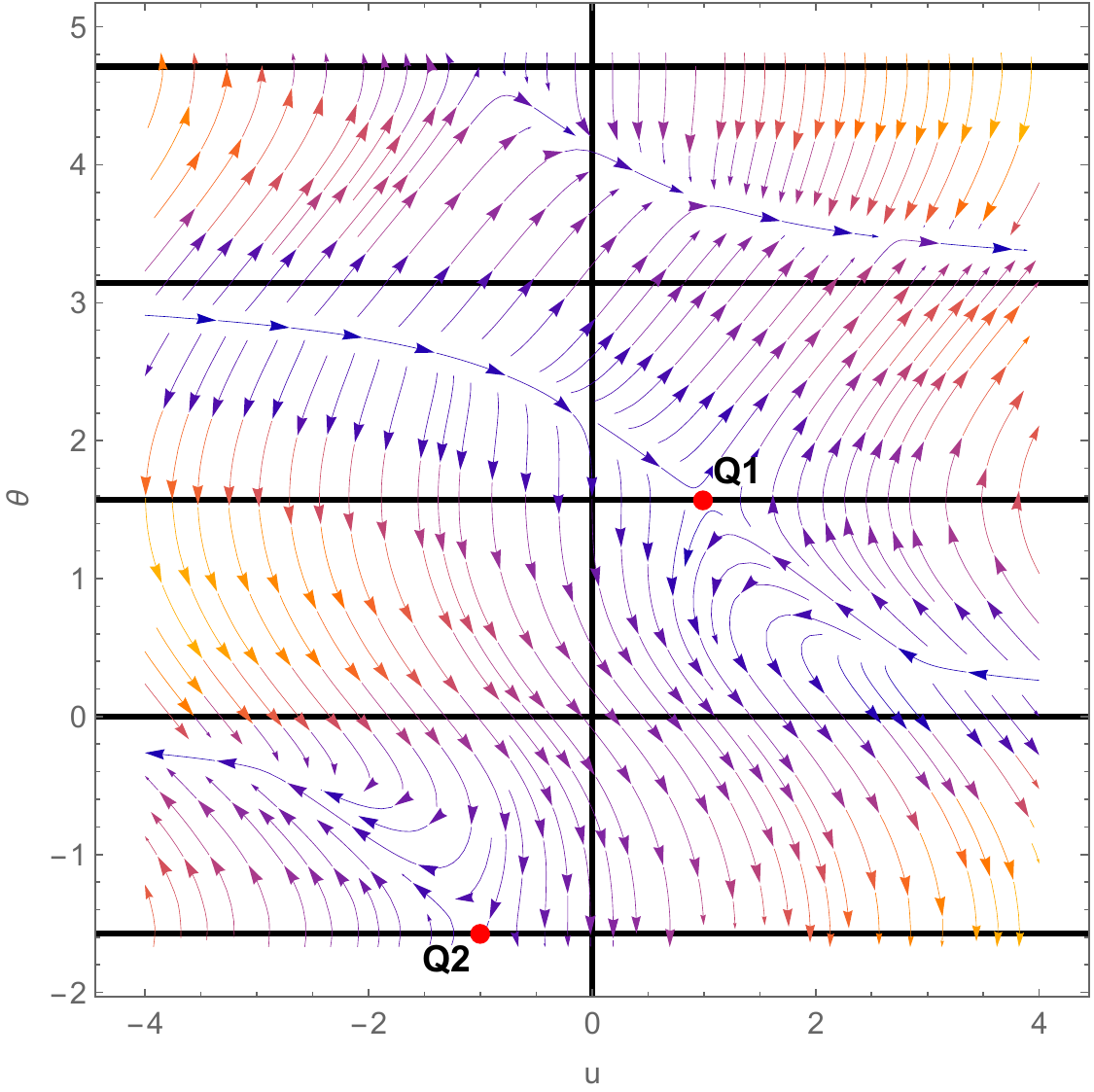} 
 \caption{The phase portrait of \eqref{fase5}, indicating the horizontal lines $\theta=\pm\frac{\pi}{2}, 0, \pi$ and $\frac{3\pi}{2}$. Note that there are trajectories that remain in the strip $\frac{\pi}{2}<\theta<\frac{3\pi}{2}$. However, no such trajectories remain in the strip $-\frac{\pi}{2}<\theta<\frac{\pi}{2}$. }
 \label{fig4}
 \end{figure}
 
 To determine the stability, we compute the Jacobian matrix of \eqref{fase5}:
$$J(u, \theta) = \begin{pmatrix} -\sin\theta\cos\theta & -2\sin\theta\cos\theta - u(\cos^2\theta-\sin^2\theta) \\ \sin\theta & u\cos\theta + \sin\theta - 2\sin\theta\cos\theta \end{pmatrix}$$
Evaluating at $Q_1$ and $Q_2$, we obtain
$$J(Q_1) = \begin{pmatrix} 0 & 1 \\ 1 & 1 \end{pmatrix},\qquad J(Q_2) = \begin{pmatrix} 0 & -1 \\ -1 & -1 \end{pmatrix}.$$
Both points are saddle points because their eigenvalues are real numbers with opposite signs.

Unlike the extrinsic case, the equilibrium points $Q_1$ and $Q_2$ do not correspond to any exact geometric solution. Indeed, a constant solution at $Q_1$ would require $\theta = \pi/2$ (hence $y'=0$ and $z'=1$) and simultaneously $u=1$ (hence $y = e^z$). Differentiating the latter yields $y' = e^z \neq 0$, which is a contradiction.

\begin{enumerate}
\item This is a consequence of Theorem \ref{t62}.

\item Consider initial conditions $\theta(0)=0$, $y(0)=0$. Then the corresponding trajectory in the phase plane crosses the lines $\theta=\pm\pi/2$. However, the solution $\gamma$ cannot be extended past $\cos\theta=0$. Since this occurs at some finite values of $y$, then $\gamma$ is defined on a a finite interval and, at the endpoints, the tangent vectors are vertical (Fig. \ref{fig5}, left).

\item Trajectories that remain entirely within the strip $\pi/2 < \theta < 3\pi/2$ for all $\tau \in \mathbb{R}$ correspond to curves where $\cos\theta$ remains strictly negative. For these trajectories, as $\tau \to \pm \infty$, the scaled coordinate $|u|$ diverges and the angle $\theta$ approaches $\pi$. Since $\cos\theta \to -1$ in both limits, the relation $ds = \cos\theta \, d\tau$ implies that the integral for the arc length diverges in both directions. Thus, the generating curve is defined for all $s \in \mathbb{R}$ and, since $y'(s) = e^z \cos\theta$ never vanishes and does not change sign, the solution constitutes an entire graph over the $y$-axis. See Fig. \ref{fig5}, center.
\end{enumerate}
\end{proof}
 
 Unlike the extrinsic case, the generating curve may not be a graph over the $y$-axis. For example, if at some point the tangent vector is vertical, this implies $\theta=\pm\pi/2$. Then \eqref{132} implies $ye^{-z}=\pm 1$ at that point. See Fig. \ref{fig5}, right. In the phase plane of \eqref{fase5} (Fig. \ref{fig4}), the corresponding trajectory comes from $u=\infty$ towards the equilibrium point $Q_1$ and then comes back to $u\to\infty$ remaining the trajectory in a horizontal strip.

 \begin{figure}
 \includegraphics[width=.3\textwidth]{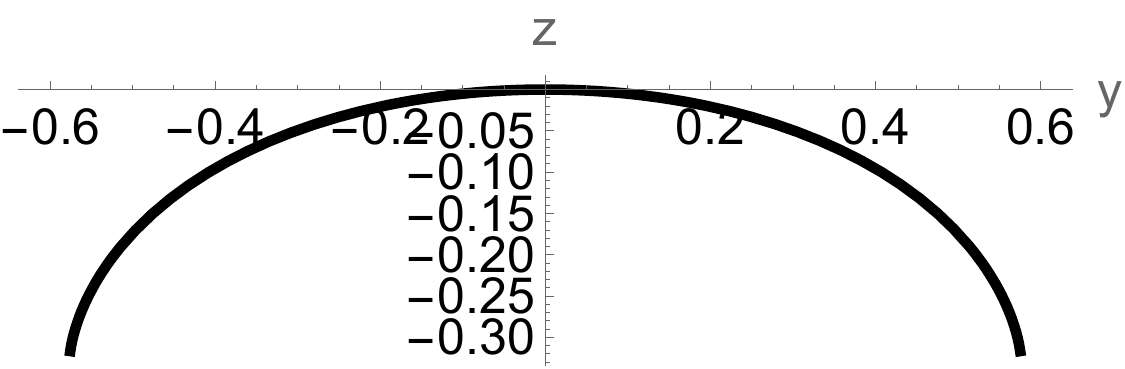} \quad \includegraphics[width=.3\textwidth]{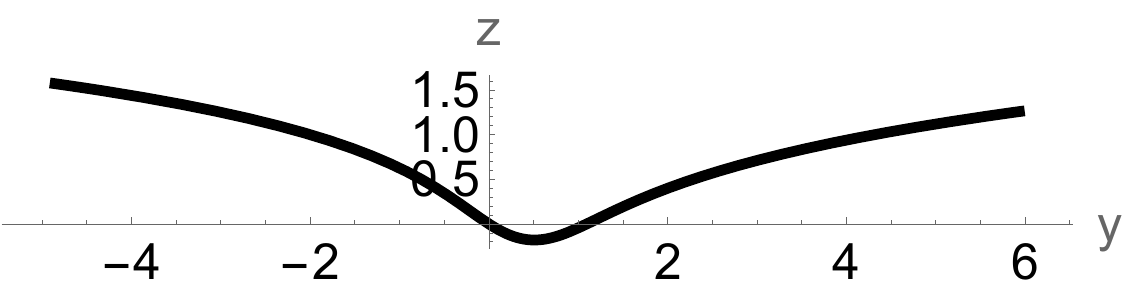}\quad \includegraphics[width=.3\textwidth]{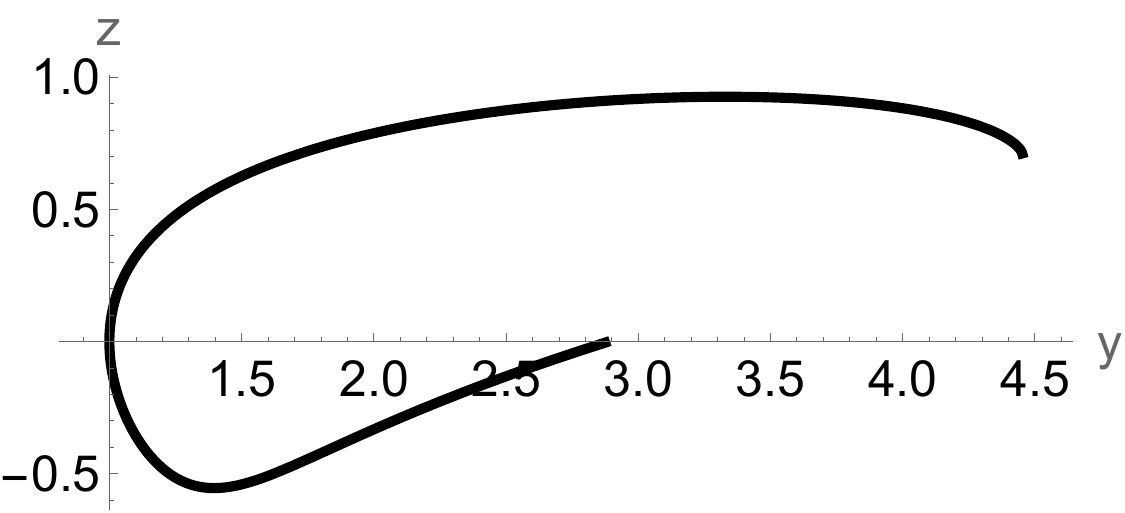}
 \caption{Generating curves of $F_1$-invariant intrinsic $F_3$-solitons.}
 \label{fig5}
 \end{figure}

 \section*{Acknowledgements}
Rafael Belli thanks the Department of Geometry and Topology of the University of Granada for their hospitality, where this work was carried out. 
Rafael L\'opez has been partially supported by MINECO/MICINN/FEDER grant
no. PID2023-150727NB-I00, and by the ``Mar\'{\i}a de Maeztu'' Excellence
Unit IMAG, reference CEX2020-001105- M, funded by MCINN/AEI/10.13039/
501100011033/ CEX2020-001105-M.

\section*{Ethics declarations}

Conflict of interest. The authors declare that they have no conflict of interest. No datasets were generated or analysed during the current study.


\end{document}